\documentclass[11pt]{amsart}

\usepackage[margin=1in]{geometry}
\usepackage[T1]{fontenc}
\usepackage{lmodern,microtype}
\usepackage{amsmath,amssymb,amsthm,mathtools}
\usepackage{booktabs,array}
\usepackage{tikz,pgfplots}
\pgfplotsset{compat=1.18}
\usepackage[hidelinks]{hyperref}

\numberwithin{equation}{section}
\newtheorem{theorem}{Theorem}[section]
\newtheorem{proposition}[theorem]{Proposition}
\newtheorem{lemma}[theorem]{Lemma}
\newtheorem{corollary}[theorem]{Corollary}
\theoremstyle{remark}
\newtheorem{remark}[theorem]{Remark}
\newtheorem*{theoremA}{Theorem A}
\newtheorem*{theoremB}{Theorem B}
\newtheorem*{theoremC}{Theorem C}

\newcommand{\C}{\mathbb C}
\newcommand{\Z}{\mathbb Z}
\newcommand{\g}{\mathfrak g}
\newcommand{\q}{\mathfrak q}
\newcommand{\p}{\mathfrak p}
\newcommand{\s}{\mathfrak s}
\newcommand{\slie}{\mathfrak{sl}}

\newcommand{\rv}{\vee}
\newcommand{\one}{\mathbf 1}
\DeclareMathOperator{\ad}{ad}
\DeclareMathOperator{\diag}{diag}
\DeclareMathOperator{\ind}{ind}

\title[Nonunimodal principal spectra]{Nonunimodal principal spectra of Frobenius parabolic Lie algebras}
\author{Vincent E. Coll, Jr.}
\address{Department of Mathematics, Lehigh University, Bethlehem, Pennsylvania 18015, USA}
\email{vec208@lehigh.edu}
\thanks{ORCID: 0000-0002-5775-5522.}
\date{}

\subjclass[2020]{Primary 17B20; Secondary 05E16, 05C25}
\keywords{Frobenius Lie algebra, principal grading, parabolic Lie algebra, seaweed, Ooms spectrum, $SL_2$ character, Lefschetz property, classical Yang--Baxter equation}

\begin{document}

\begin{abstract}
Strict unimodality of Ooms spectra fails for Frobenius parabolic Lie algebras beyond the maximal-parabolic class.  We construct explicit Frobenius parabolics whose principal multiplicity sequences contain arbitrarily long outward rises: for every $L\ge1$ there are $L$ consecutive negative first differences, each of quadratic size in $L$.  In a simple one-parameter subfamily the transition to nonunimodality occurs in $\slie_{136}$, where two adjacent multiplicities are $541<542$.  We also give a representation-theoretic criterion for unimodality.  If $F$ is a Frobenius functional with semisimple integral principal element $H$ and $\g=\bigoplus_j\g_j$, the Kirillov form makes the centered grading symplectic and its character has the unique expansion
\[
 \Theta_\g(x)=\sum_{m\ge1}(\dim\g_m-\dim\g_{m+1})\,\chi_{2m-1}(x)
\]
in irreducible $SL_2$ characters.  Thus Ooms unimodality is equivalent to effectivity of this virtual $SL_2$ character, or equivalently to extension of the centered principal cocharacter to a symplectic $SL_2$ action.  For Frobenius seaweeds the unbroken-spectrum theorem says that, after centering, every odd weight in the support occurs; strict unimodality asks for the stronger positivity of every irreducible coefficient.  A companion paper proves that stronger property for every Frobenius maximal parabolic.  The families constructed here show that beyond that class there can be arbitrarily long blocks of negative irreducible coefficients, with unbounded total negative mass.
\end{abstract}

\maketitle

\section{Introduction}

Let $\g$ be a finite-dimensional complex Frobenius Lie algebra and let $F\in\g^*$ be Frobenius.  The nondegenerate Kirillov form
\[
 B_F(x,y)=F([x,y])
\]
identifies $\g$ with $\g^*$ and determines the principal element $H=\widehat F$ by
\[
 F([H,x])=F(x)\qquad(x\in\g).
\]
When $H$ is semisimple, its eigenspace decomposition
\[
 \g=\bigoplus_j\g_j,\qquad [H,x]=jx\quad(x\in\g_j),
\]
is the principal grading.  For algebraic Frobenius Lie algebras the spectrum is independent of the Frobenius functional; this is the Ooms spectrum \cite{Ooms,GG}.

The semisimple integral hypothesis is a genuine restriction within the class of Frobenius Lie algebras.  In general a principal element need not be semisimple, and even semisimple principal elements can have nonrational eigenvalues; explicit families are given by Diatta--Manga \cite{DiattaManga}.  By contrast, Gerstenhaber--Giaquinto proved that the principal element of a saturated Frobenius subalgebra of a simple Lie algebra is semisimple and that, for saturated Frobenius subalgebras of $\slie_n$, its eigenvalues are integral and independent of the Frobenius functional \cite{GG}.  Joseph later established integrality for Frobenius biparabolic subalgebras of arbitrary semisimple Lie algebras \cite{Joseph}.  Seaweeds are biparabolic subalgebras containing a Cartan subalgebra, so these integrality results apply to them.

Building on earlier type-$A$ work \cite{CMW,CHM}, Cameron--Coll--Hyatt--Magnant proved that the principal spectrum of every Frobenius seaweed is an unbroken set of integers and that its multiplicities are symmetric about $1/2$ \cite{CCHM}.  The symmetry is also a general consequence of Frobenius duality, already present in Ooms' work \cite{Ooms}; the unbroken-support assertion is the additional seaweed phenomenon that will be essential below.

The remaining shape question was unimodality.  Coll--Magnant--Wang initiated it by conjecturing strict unimodality of the multiplicities \cite[Conjecture~21]{CMW}.  Mayers--Russoniello proved substantial positive families \cite{MR}.  For Frobenius maximal parabolics, Giaquinto--Irving--Lauve--Mastnak proved unimodality by a good-grading argument \cite{GILM}.  In essentially contemporaneous work, by different methods, Coll proved the stronger statement that every Frobenius maximal parabolic of type $A$ has a \emph{strictly} unimodal Ooms multiplicity sequence, together with further log-concavity results \cite{CollMax}.  Thus the full maximal-parabolic class satisfies the strict positivity predicted by the Coll--Magnant--Wang conjecture.

We show that this behavior does not persist for general Frobenius parabolics.  There are explicit infinite families of nonunimodal principal spectra in which the outward rise can be made arbitrarily long and arbitrarily large.

\begin{theoremA}
For every integer $L\ge1$, there is an explicitly constructed Frobenius parabolic with principal multiplicities $h_j$ such that, for every $j=3,\ldots,L+2$,
\begin{equation}\label{eq:strongdefectintro}
 h_j-h_{j+1}\le-\bigl(2(L+5)^2-1\bigr).
\end{equation}
In particular,
\[
 h_3<h_4<\cdots<h_{L+3}.
\]
The total rise over this block is
\begin{equation}\label{eq:massintro}
 h_{L+3}-h_3=\frac{L(5L^2+43L+106)}2.
\end{equation}
For every $e\in\g_1$, each of the $L$ maps
\[
 \ad e:\g_j\longrightarrow\g_{j+1},\qquad j=3,\ldots,L+2,
\]
has cokernel dimension at least $2(L+5)^2-1$.
\end{theoremA}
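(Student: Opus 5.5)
The plan is to build an explicit family of type-$A$ Frobenius parabolics $\p_L\subset\slie_n$ and compute the principal grading by hand. For a seaweed of type $A$, Gerstenhaber--Giaquinto and Joseph give a semisimple integral principal element $H$, and it can be read off from the meander. On each cycle-free path component the diagonal entries of $H$ alternate along the path, and each root space $E_{ij}$ in the seaweed has eigenvalue $H_{ii}-H_{jj}$. So once the meander of $\p_L$ is written down, the multiplicities are $h_j=\#\{(i,k): E_{ik}\in\p_L,\ H_{ii}-H_{kk}=j\}$, together with the Cartan, which lies in degree $0$.

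The family I would choose has block composition $(a_1,\dots,a_r\mid n)$ with a few large blocks of size about $L+5$ and one small perturbing block. The meander should then be a single long path whose diagonal of $H$ is a concatenation of a small number of arithmetic progressions. Frobeniusness is checked through the standard index formula: the meander is a single tree, equivalently the Coll-type gcd/signature reduction terminates at $(1\mid 1)$. The sizes are tuned so that the degree $0$ part is as large as it can be while the profile of $H$ has a long plateau.

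Given explicit piecewise-linear diagonals, I count pairs block by block. Each pair of progressions contributes a trapezoidal (piecewise-linear) function of $j$, and summing the block contributions gives $h_j$ as a piecewise quadratic in $(j,L)$ on the range $3\le j\le L+3$. The key point is that an off-diagonal block pairing two progressions with a shifted offset $c\approx 3$ contributes a count that increases in $j$ for $j\ge c$. That rising contribution has slope of order $(L+5)^2$, because the rectangular block has dimension about $(L+5)^2$ shifted by the step. It dominates the $O(L)$ decreases coming from the diagonal blocks. Taking differences gives $h_j-h_{j+1}=-(2(L+5)^2-1)-(\text{nonnegative correction})$, which yields \eqref{eq:strongdefectintro}. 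Summing the exact differences over $j=3,\dots,L+2$ should then telescope to the cubic in \eqref{eq:massintro}, and I would verify this by evaluating the closed forms at $j=3$ and $j=L+3$.

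The cokernel claim is then immediate from the multiplicity inequality. For any $e\in\g_1$ the map $\ad e:\g_j\to\g_{j+1}$ has rank at most $\dim\g_j$, so its cokernel has dimension at least $h_{j+1}-h_j\ge 2(L+5)^2-1$. The main obstacle is the construction itself, namely finding block sizes for which the meander is a single path, so that the parabolic is Frobenius, while the resulting $H$ still has the shifted-progression structure. My first attempt would take compositions like $(m,m,\dots,m,1)$ with $m=L+5$ and a nonmaximal second cut, and I would verify the meander by Panyushev-type reduction. The counting, though lengthy, is routine.
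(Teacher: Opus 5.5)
There is a genuine gap. You never produce the parabolic, and the scale you propose for it cannot work.

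The inequalities in Theorem A give $h_{L+3}\ge h_3+L\bigl(2(L+5)^2-1\bigr)$. The symmetry $h_{1-j}=h_j$ then forces $\dim\g\ge 2h_{L+3}>4L^3$, hence $n>2L^{3/2}$ if $\g\subseteq\slie_n$. A parabolic assembled from a few blocks of size about $L+5$ has $n=O(L)$ and dimension $O(L^2)$. So no tuning of such blocks can succeed for large $L$.

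The faulty step is the heuristic that a rectangular block of dimension about $(L+5)^2$ produces a rise of slope about $(L+5)^2$. Under your own assumption that the potentials form a few arithmetic progressions, each potential level carries $O(1)$ vertices. A block of size $m$ therefore contributes only $O(m)$ to each $h_j$, and cannot climb by $\sim m^2$ at each of $L$ consecutive steps.

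Your trial shape also fails to be Frobenius in general. A single-path meander has exactly two odd parts in total. So for odd $m$, any $(m,m,\dots,m,1\mid n)$ with at least two copies of $m$ is not Frobenius (e.g.\ $(5,5,1\mid 11)$). With one copy you get a maximal parabolic, where strict unimodality holds. Finally, the exact rise $L(5L^2+43L+106)/2$ belongs to one specific family. It cannot be checked by telescoping for a family that has not been specified.

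What is missing is a free parameter whose contribution to the outward differences is uniformly negative and can be scaled independently of the positive part. The paper obtains it from two path-preserving operations.

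\emph{Endpoint extension.} An alternating chain of $2r$ new vertices (blocks $2^{[r]}$ on top and bottom) is attached at the top-unmatched endpoint of a Frobenius seed. This concentrates $r$ vertices at each of the potentials $\theta-1,\theta$, i.e.\ $U\mapsto U+rV_\theta$. It changes the Ooms polynomial only by the central term $2r(1+z)$.

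\emph{Full fold.} The map $\q(A\mid B)\mapsto\q((2N)\mid(A^{\rm rev},B))$ keeps a single path and adds $(1+z)WW^\rv$. So the cross term $r(1+z)(UV^\rv+VU^\rv)$ enters. For the square seed $\q((1,t-1,t(t-1))\mid(t^2))$ its coefficients are $2d+3$, which contributes exactly $-4r$ to $h_j-h_{j+1}$ for $3\le j\le t-3$.

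The $r$-independent part is $6jt-\frac{9j^2+j}{2}+1\le 2t^2+1$. Taking $r=t^2$ and $t=L+5$ gives the bound in $\slie_{6t^2}$, consistent with the dimension count above. Summing the exact differences gives the stated cubic. Your cokernel argument is correct and is the same as the paper's.
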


A particularly simple one-parameter family already gives infinitely many counterexamples.

\begin{theoremB}
For every $r\ge0$, the parabolic
\begin{equation}\label{eq:simplefamily}
 \p_r=\q\bigl((60+4r)\mid(24,5,2^{[r]},1,2^{[r]},30)\bigr)
\end{equation}
is Frobenius, has dimension $8r^2+244r+2550$, and satisfies
\[
 h_3(r)=237+16r,\qquad h_4(r)=181+19r,\qquad h_3(r)-h_4(r)=56-3r.
\]
Its Ooms spectrum is strictly unimodal for $0\le r\le18$ and is not unimodal for every $r\ge19$.  At $r=19$ the algebra lies in $\slie_{136}$, has dimension $10{,}074$, and its positive-degree multiplicities are
\[
 (1659,891,541,542,485,379,268,170,82,20).
\]
\end{theoremB}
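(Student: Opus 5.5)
Theorem B has three parts: the Frobenius property, the closed forms for $\dim\p_r$, $h_3(r)$ and $h_4(r)$, and the unimodality dichotomy. The plan is to handle them in that order.

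\emph{Frobenius property.} The parabolic $\q(n\mid a_1,\dots,a_k)$ is the seaweed with top composition $(n)$ and bottom composition $(a_1,\dots,a_k)$. Its index is computed from the meander graph. I would run the standard signature reduction of Coll et al.\ (the moves $a_1>a_2\mapsto\dots$, block elimination, and so on) on
\[
(60+4r)\mid(24,5,2^{[r]},1,2^{[r]},30).
\]
Since $24+5+2r+1+2r+30=60+4r$, this is a genuine seaweed. The top-row winding against the blocks $24$ and $30$ pairs the two $2^{[r]}$ strings symmetrically around the central $1$. After finitely many reductions, each independent of $r$, the meander should collapse to a single path; equivalently the signature reduces to the trivial one. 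So the index is $0$, and this holds uniformly in $r$ because the reduction steps peel off pairs of $2$'s.

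\emph{Dimension.} This is direct. The parabolic has dimension $\dim\mathfrak b+\sum_i\binom{a_i}{2}$ in $\slie_{60+4r}$. Substituting gives a quadratic in $r$, which I would check equals $8r^2+244r+2550$.

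\emph{Computing $h_3(r)$ and $h_4(r)$.} For seaweeds in type $A$ the principal element is known explicitly from the meander: $H$ is diagonal, $H=\diag(d_1,\dots,d_n)$, with integer entries produced by the homotopy/path structure of the meander (Gerstenhaber--Giaquinto and Coll et al.). The multiplicity $h_j$ then counts root spaces $E_{ab}$ in $\p_r$ with $d_a-d_b=j$, together with Cartan contributions at $j=0$.

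The key step is to determine the diagonal sequence $(d_a)$ in closed form as a function of $r$. I expect it to consist of a fixed pattern on the $24$, $5$, $1$ and $30$ blocks, interleaved with arithmetic progressions along the $2^{[r]}$ strings. Once this is in hand, counting pairs with difference $3$ or $4$ inside $\p_r$ reduces to counting lattice points. Because the $2$-blocks contribute a bounded number of entries at each value, these counts are linear in $r$, and I would verify them against $237+16r$ and $181+19r$. I expect this eigenvalue bookkeeping to be the main obstacle. As a safeguard, I would first compute $H$ numerically for $r=0,1,2,3$ by solving $F([H,x])=F(x)$ for a meander-built Frobenius functional, and read off the pattern to be proved.

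\emph{Unimodality dichotomy.} The difference is $h_3-h_4=56-3r$, which is negative exactly when $r\ge19$. That gives nonunimodality for all $r\ge19$: by the symmetry about $1/2$ together with the sequence $h_1>h_2>h_3<h_4$, there is an interior valley. For $0\le r\le18$, finitely many cases remain. I would establish strict unimodality by computing every multiplicity $h_j$ from the explicit $H$, either in general-$r$ closed form or by direct enumeration for each of the 19 values. I would also record the full vector at $r=19$ together with $\dim=10{,}074$ and $n=136$ as a check.
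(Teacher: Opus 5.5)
Your dimension count is correct, and your signature-reduction route to the Frobenius property works. It differs from the paper, which shows that endpoint extension and the full fold preserve the single-path property and checks the seed $\q((1,5,24)\mid(30))$ directly. Carrying your route out:
\begin{itemize}
\item two pure contractions, then $r$ more with $b_1=2$, take $(60+4r\mid 24,5,2^{[r]},1,2^{[r]},30)$ to $(2,2^{[r]},5,24\mid 1,2^{[r]},30)$;
\item a block elimination gives $(1,2^{[r]},5,24\mid 2^{[r]},30)$;
\item alternating flips and block eliminations remove the $2$'s in pairs down to $(1,5,24\mid 30)$, which reduces to $(1\mid 1)$.
\end{itemize}

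The genuine gap is in the spectral part, which is the substance of the theorem. You never produce the principal element for general $r$. Computing at $r=0,\ldots,3$ only suggests a pattern, and nothing in your proposal would prove it. So $h_3(r)=237+16r$ and $h_4(r)=181+19r$ remain unproved for the infinitely many $r\ge19$ at which nonunimodality is asserted. The guidance you give for this step is also wrong. The potential is not an arithmetic progression along the $2^{[r]}$ strings. Nor are $h_3,h_4$ linear because each potential value is attained boundedly often. In fact the string vertices take only the values $7,8,9$ (normalization below), each at least $r$ times. Pairs among them produce genuinely quadratic terms, $h_1=310+14r+3r^2$ and $h_2=283+13r+r^2$. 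The multiplicities $h_3,h_4$ are linear only because those pairs have potential differences in $\{-2,\ldots,2\}$ and never reach degree $3$.

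The missing idea is a structure your own reduction exposes right after its first block elimination. Put $N=30+2r$. Then $\p_r$ is the full fold $\q((2N)\mid(A^{\rm rev},B))$ of $\s_r=\q((1,2^{[r]},5,24)\mid(2^{[r]},30))$. In turn, $\s_r$ is the endpoint extension of the seed $\q((1,5,24)\mid(30))$ by an alternating chain. The pieces of the potential are:
\begin{itemize}
\item the seed has a grid potential on $[0,9]$ with histogram $U=I_6I_5$ and endpoint potential $9$;
\item the chain carries $9,8,\ldots,9,8$;
\item the fold assigns $(w_N-1,\ldots,w_1-1\mid w_1,\ldots,w_N)$.
\end{itemize}
So the left string alternates $8,7$, the central singleton has potential $8$, and the right string alternates $9,8$.

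Propositions~\ref{prop:extension} and~\ref{prop:fold} (via Lemma~\ref{lem:evenblock}) then give $H_{\p_r}=B_5+C_6B_4+2r(1+z)+(1+z)(U+rV)(U^\rv+rV^\rv)$ with $V=z^8+z^9$. Since $VV^\rv=2+z+z^{-1}$, for $j\ge3$ one gets $h_j=[z^j](B_5+C_6B_4)+c_j+c_{j-1}+r(M_j+M_{j-1})$. Here $c$ is the autocorrelation of $(1,2,3,4,5,5,4,3,2,1)$ and $M=UV^\rv+VU^\rv$. This yields $h_3=71+166+16r$ and $h_4=53+128+19r$.

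The same expansion gives all ten $h_j(r)$ in closed form. Strict unimodality for $0\le r\le18$ then follows by checking the sign of each difference polynomial, with no case enumeration. Only $56-3r$ changes sign, and $h_2-h_3=(r-1)(r-2)+44$. For $r\ge19$ you need only $h_3<h_4$ together with $h_{1-j}=h_j$; the chain $h_1>h_2>h_3$ plays no role.
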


The first displayed counterexample contains the local reversal $541<542$, while Theorem~A gives arbitrarily long blocks of consecutive outward increases.  The constructions are uniform.  For integers $a,b\ge2$ and $r\ge0$, write $2^{[r]}$ for a list of $r$ parts equal to $2$ and set
\begin{equation}\label{eq:introfamily}
 \p_{a,b,r}
 =\q\bigl((2ab+4r)\mid(a(b-1),a-1,2^{[r]},1,2^{[r]},ab)\bigr).
\end{equation}
Every algebra in \eqref{eq:introfamily} is Frobenius.  For square parameters $a=b=t$, the adjacent differences satisfy
\begin{equation}\label{eq:introgap}
 h_j(t,t,r)-h_{j+1}(t,t,r)
 =6jt-\frac{9j^2+j}{2}+1-4r,
 \qquad3\le j\le t-3.
\end{equation}
Taking $r=t^2$ and $t=L+5$ gives Theorem~A; the specialization $(a,b)=(6,5)$ gives Theorem~B.

There is also a representation-theoretic interpretation of unimodality.  Put $h_j=\dim\g_j$ and define the \emph{centered principal character}
\begin{equation}\label{eq:centeredcharintro}
 \Theta_\g(x)=\sum_j h_jx^{2j-1}.
\end{equation}
If $\chi_n(x)=x^n+x^{n-2}+\cdots+x^{-n}$ denotes the character of $\operatorname{Sym}^n(\C^2)$, the following theorem identifies unimodality with an $SL_2$-extension condition.

\begin{theoremC}
Let $F$ be a Frobenius functional whose principal element $H$ is semisimple with integral eigenvalues.  Then the centered cocharacter
\[
 \lambda_H(x)|_{\g_j}=x^{2j-1}\operatorname{id}
\]
preserves the Kirillov form, and
\begin{equation}\label{eq:weylintro}
 \Theta_\g(x)=\sum_{m\ge1}(h_m-h_{m+1})\chi_{2m-1}(x).
\end{equation}
The following are equivalent:
\begin{enumerate}
\item the principal multiplicities are unimodal;
\item the virtual $SL_2$ character $\Theta_\g$ is effective;
\item $\lambda_H$ extends to a homomorphism $SL_2\to Sp(\g,B_F)$.
\end{enumerate}
Strict unimodality is equivalent to strict positivity of every coefficient in \eqref{eq:weylintro} through the occupied positive support.
\end{theoremC}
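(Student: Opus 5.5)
The plan has three parts: show the Kirillov form pairs degree $j$ with degree $1-j$, expand $\Theta_\g$ by Abel summation, and then read off each equivalence.

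\textbf{Pairing and the cocharacter.} Since $H$ is the principal element, $F([H,x])=F(x)$ for all $x$, so $F$ vanishes on $\g_j$ unless $j=1$. Hence $B_F(\g_i,\g_j)=F([\g_i,\g_j])\subseteq F(\g_{i+j})$ is zero unless $i+j=1$. Nondegeneracy of $B_F$ then forces $B_F$ to restrict to a perfect pairing $\g_j\times\g_{1-j}\to\C$. In particular $h_j=h_{1-j}$, which is the symmetry about $1/2$. For $x\in\g_i$ and $y\in\g_{1-i}$ we have
\[
B_F(\lambda_H(t)x,\lambda_H(t)y)=t^{(2i-1)+(2(1-i)-1)}B_F(x,y)=B_F(x,y),
\]
and all other pairs of homogeneous pieces pair to zero, so $\lambda_H$ lands in $Sp(\g,B_F)$. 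This also shows that the weights $2j-1$ are odd and that $\Theta_\g(x^{-1})=\Theta_\g(x)$.

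\textbf{The character expansion.} I would expand by Abel summation. Write $\Theta_\g=\sum_{m\ge1}h_m\,(x^{2m-1}+x^{1-2m})$ using the symmetry, and note that $x^{2m-1}+x^{1-2m}=\chi_{2m-1}-\chi_{2m-3}$ with $\chi_{-1}=0$. Summing by parts over the finite support gives $\sum_{m\ge1}(h_m-h_{m+1})\chi_{2m-1}$. Uniqueness holds because the $\chi_n$ are linearly independent: each has a distinct leading term $x^n$.

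\textbf{Equivalences.} The symmetric sequence is centered at $1/2$, with the two middle terms $h_0=h_1$. Therefore unimodality is the same as $h_1\ge h_2\ge\cdots$ on the positive side, which is exactly nonnegativity of every coefficient $h_m-h_{m+1}$; this gives (1)$\Leftrightarrow$(2). For (3)$\Rightarrow$(2), restricting any extension $\rho:SL_2\to Sp(\g,B_F)$ to the torus recovers $\lambda_H$, so the character of $\rho$ is $\Theta_\g$, and the character of a genuine representation is effective. For (2)$\Rightarrow$(3), I take $V=\bigoplus_m V_{2m-1}^{\oplus(h_m-h_{m+1})}$. Each $V_{2m-1}$ has odd highest weight, hence is symplectic, so $V$ carries an $SL_2$-invariant symplectic form whose torus weights match those of $(\g,\lambda_H)$.

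The main obstacle is then identifying $V$ with $\g$ so that both the torus action and the form agree. The torus-equivariant symplectic structures are classified by the weight pairings: weights $k$ and $-k$ are paired perfectly, and the pairing on a given pair of weight spaces is arbitrary, since any perfect pairing $W_k\times W_{-k}$ is $GL(W_k)$-equivalent to any other. So I would choose a torus-equivariant linear isomorphism $V\to\g$ carrying one form to the other. Concretely, fix bases of $\g_j$ for $j\ge1$ and use the dual bases in $\g_{1-j}$ via $B_F$. Transporting the $SL_2$ action through this isomorphism gives the extension. The strict version follows because strict unimodality means $h_m>h_{m+1}$ for every $m$ with $h_m>0$, i.e., strict positivity of each coefficient through the occupied positive support.
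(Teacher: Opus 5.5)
Your proposal is correct and follows the same route as the paper's proof of Theorem~\ref{thm:sl2dictionary}: the degree-$(j,1-j)$ pairing, the tail-sum (summation-by-parts) check of the expansion, the module $\bigoplus_m d_m\operatorname{Sym}^{2m-1}(\C^2)$ transported to $(\g,B_F)$ via bases on positive weight spaces and $B_F$-dual bases on negative ones, and uniqueness of the expansion for the converse. One small correction to your last line: strict unimodality should be phrased as $h_m>h_{m+1}$ for all $1\le m\le N$, where $N$ is the largest index with $h_N>0$, not for all $m$ with $h_m>0$, because for a general Frobenius algebra the positive support can have gaps (for instance $\C x\ltimes\langle e_{-2},e_1,e_3\rangle$ with $[x,e_k]=ke_k$, $[e_3,e_{-2}]=e_1$, $F=e_1^*$ has principal element $x$ and $(h_1,h_2,h_3)=(1,0,1)$, which satisfies your condition but is not unimodal).
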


The unbroken-spectrum theorem can be restated in these terms.  After centering, its support is the complete odd string
\[
 -(2N-1),-(2N-3),\ldots,-1,1,\ldots,2N-3,2N-1.
\]
If
\[
 p_m=h_m-h_{m+1},\qquad h_{N+1}=0,
\]
then \eqref{eq:weylintro} gives
\begin{equation}\label{eq:tailsumsintro}
 h_j=\sum_{m=j}^{N}p_m.
\end{equation}
Unbrokenness says that every tail sum is strictly positive.  The Coll--Magnant--Wang strict-unimodality conjecture asks for the stronger coefficientwise inequalities $p_m>0$.  Thus earlier work established full weight support and positivity in the weight basis, whereas unimodality asks for positivity in the irreducible $SL_2$ basis.  The examples above separate these two notions.

For the $10{,}074$-dimensional example in Theorem~B, the irreducible expansion is
\begin{equation}\label{eq:136virtualintro}
 \Theta_{\p_{19}}
 =768\chi_1+350\chi_3-\chi_5+57\chi_7+106\chi_9+111\chi_{11}
 +98\chi_{13}+88\chi_{15}+62\chi_{17}+20\chi_{19}.
\end{equation}
Thus the first displayed counterexample leaves the positive $SL_2$ cone by exactly one copy of $\operatorname{Sym}^5(\C^2)$, while Theorem~A produces arbitrarily long blocks of negative irreducible coefficients with unbounded total negative mass.

The grading also interacts with the standard Yang--Baxter structure of a Frobenius algebra.  The inverse Kirillov form $r_F=B_F^{-1}\in\wedge^2\g$ is a nondegenerate triangular solution of the classical Yang--Baxter equation \cite{GGCYBE}.  The principal grading satisfies
\[
 [H,r_F]=r_F,
\]
while the centered cocharacter in Theorem~C fixes $r_F$.  Moreover $\operatorname{ad}_H^*F=-F$, so $H$ generates the radial dilation of the Frobenius functional inside its open coadjoint orbit.  This is the same deformation-theoretic setting in which triangular Lie structures enter twists and universal deformation formulas; see, for example, \cite{GZ}.  Thus the Ooms multiplicities give the weight distribution of the associated conformally symplectic dilation.

The construction uses two operations on meanders.  We first consider a two-parameter family of Frobenius seaweeds whose potential histograms are products of interval polynomials.  We then attach an alternating chain at one endpoint of the meander and fold the resulting path into a parabolic.  Both operations preserve the single-path property.  Endpoint extension changes the potential histogram linearly while affecting only the two central Ooms multiplicities; the fold inserts the autocorrelation of that histogram into a new Ooms spectrum.  Away from the center the first differences therefore vary linearly with the extension parameter.

The paper is organized as follows.  Section~2 develops the centered principal character and proves Theorem~C, then fixes the seaweed and meander conventions.  Section~3 isolates the endpoint-extension and full-fold operations.  Section~4 constructs the rectangular seed family.  Section~5 proves the quantitative statement in Theorem~A.  Section~6 proves Theorem~B and displays the full spectra on both sides of the transition.  Section~7 records boundary questions suggested by the construction.  Appendix~A gives an independent exact Kirillov certificate for the $\slie_{136}$ example; it is not used in the infinite-family proof.

\section{Seaweeds, meanders, and principal spectra}\label{sec:prelim}

All Lie algebras are over $\C$. Let $F\in\g^*$. The Kirillov form is
\[
 B_F(x,y)=F([x,y]),
 \qquad
 \ind\g=\min_{F\in\g^*}\dim\ker B_F.
\]
The algebra is Frobenius when its index is zero. If $F$ is Frobenius, the principal element $\widehat F$ is characterized by
\[
 F([\widehat F,x])=F(x)\qquad(x\in\g).
\]
For the seaweeds considered below, $\ad\widehat F$ is semisimple with integral eigenvalues. Write
\[
 \g_k=\{x\in\g:[\widehat F,x]=kx\},
 \qquad
 H_\g(z)=\sum_{k\in\Z}(\dim\g_k)z^k.
\]
For convenience, we encode these multiplicities in the Laurent polynomial $H_\g$, which we call the \emph{Ooms multiplicity polynomial}.

The defining equation implies $F(\g_k)=0$ unless $k=1$. Since the Kirillov form is nondegenerate, it pairs $\g_k$ perfectly with $\g_{1-k}$. Hence
\begin{equation}\label{eq:symmetry}
 H_\g(z)=zH_\g(z^{-1}).
\end{equation}
Thus unimodality is equivalent to
\[
 [z^j]H_\g\ge[z^{j+1}]H_\g\qquad(j\ge1),
\]
with coefficients understood to be zero outside the support. We use ``strictly unimodal'' to allow the central equality forced by \eqref{eq:symmetry} at eigenvalues $0$ and $1$.

\subsection{The centered principal character and the \texorpdfstring{$SL_2$}{SL2} cone}

The perfect pairing behind \eqref{eq:symmetry} contains more information than numerical symmetry.  The principal identity and Jacobi give, for all $x,y\in\g$,
\begin{equation}\label{eq:conformal}
 B_F([H,x],y)+B_F(x,[H,y])=B_F(x,y).
\end{equation}
Thus $\ad H-\tfrac12 I$ is infinitesimally symplectic.  Since the principal eigenvalues are integral in the situations considered here, define
\begin{equation}\label{eq:centeredcochar}
 \lambda_H(x)|_{\g_j}=x^{2j-1}\operatorname{id}_{\g_j},
 \qquad x\in\C^\times.
\end{equation}
Equation~\eqref{eq:conformal} says precisely that $\lambda_H$ takes values in $Sp(\g,B_F)$.

We encode its character by
\begin{equation}\label{eq:centeredcharacter}
 \Theta_\g(x)=\operatorname{Tr}(\lambda_H(x)|\g)
 =\sum_jh_jx^{2j-1}=x^{-1}H_\g(x^2).
\end{equation}
For $m\ge1$, let
\[
 \chi_{2m-1}(x)=x^{2m-1}+x^{2m-3}+\cdots+x^{-(2m-1)}
\]
be the character of $\operatorname{Sym}^{2m-1}(\C^2)$.

\begin{theorem}[$SL_2$ criterion for unimodality]\label{thm:sl2dictionary}
Let $F$ be Frobenius and suppose its principal element is semisimple with integral eigenvalues.  Put $h_m=0$ beyond the support.  Then
\begin{equation}\label{eq:weylexpansion}
 \boxed{\Theta_\g(x)=\sum_{m\ge1}(h_m-h_{m+1})\chi_{2m-1}(x).}
\end{equation}
The following are equivalent:
\begin{enumerate}
\item $h_1\ge h_2\ge h_3\ge\cdots$;
\item every coefficient in \eqref{eq:weylexpansion} is nonnegative;
\item the symplectic cocharacter $\lambda_H$ extends to a representation $SL_2\to Sp(\g,B_F)$ whose diagonal torus is \eqref{eq:centeredcochar}.
\end{enumerate}
If the positive support is $1,\ldots,N$, strict unimodality is equivalent to $h_m-h_{m+1}>0$ for $1\le m\le N$, where $h_{N+1}=0$.
\end{theorem}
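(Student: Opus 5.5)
The plan has three parts: verify the expansion by telescoping, get (1)$\Leftrightarrow$(2) from independence of characters, and get (2)$\Leftrightarrow$(3) by building an explicit symplectic $SL_2$ action.

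\emph{The expansion.} First we record the symmetry $h_j=h_{1-j}$ from \eqref{eq:symmetry}. Hence $\Theta_\g$ is invariant under $x\mapsto x^{-1}$, and its exponents $2j-1$ are odd. Such a Laurent polynomial is determined by its coefficients at positive exponents $2m-1$, $m\ge1$, i.e.\ by $h_1,h_2,\dots$. Next, the coefficient of $x^{2j-1}$ in $\sum_{m\ge1}(h_m-h_{m+1})\chi_{2m-1}$ with $j\ge1$ is $\sum_{m\ge j}(h_m-h_{m+1})$. This telescopes to $h_j$, since $h_m=0$ for large $m$. The right side is also invariant under $x\mapsto x^{-1}$, so the two sides agree, which gives \eqref{eq:weylexpansion}. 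The same telescoping gives the tail-sum formula \eqref{eq:tailsumsintro}.

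\emph{(1)$\Leftrightarrow$(2) and the strict version.} The characters $\chi_{2m-1}$ are linearly independent, because their leading terms are distinct. So the coefficients in \eqref{eq:weylexpansion} are uniquely determined and equal $h_m-h_{m+1}$. Condition (1) says precisely that these are all $\ge0$. The strict statement is the same observation restricted to $1\le m\le N$.

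\emph{(2)$\Leftrightarrow$(3).} For (3)$\Rightarrow$(2): if $\lambda_H$ extends to $\rho:SL_2\to Sp(\g,B_F)$ with diagonal torus $\lambda_H$, then $\Theta_\g$ is the character of an honest $SL_2$-module. That character is an $\mathbb N$-combination of the $\chi_n$, and by uniqueness it has nonnegative coefficients.

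For (2)$\Rightarrow$(3), the substantive step is to construct a symplectic $SL_2$ action with the prescribed torus.
\begin{enumerate}
\item Set $V_j=\g_j$. By \eqref{eq:conformal}, $B_F$ pairs $V_j$ perfectly with $V_{1-j}$ and vanishes on $V_j\times V_k$ unless $j+k=1$.
\item Choose $p_m=h_m-h_{m+1}$ as the target multiplicity of $\operatorname{Sym}^{2m-1}(\C^2)$. Each $\operatorname{Sym}^{2m-1}(\C^2)$ carries a unique-up-to-scalar invariant symplectic form, since the highest weight is odd.
\item Form $W=\bigoplus_m \C^{p_m}\otimes\operatorname{Sym}^{2m-1}(\C^2)$. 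Give it the symplectic form $\langle\,,\rangle_m^{\mathrm{sym}}\otimes(\text{any nondegenerate symmetric form on }\C^{p_m})$, taken as an orthogonal sum over $m$. This $W$ is a symplectic $SL_2$-module whose torus weight spaces have dimensions $h_j$ in degree $2j-1$.
\item Its torus-weight spaces $W_{2j-1}$ and $W_{1-2j}$ pair perfectly and nothing else pairs. So $W$, viewed only as a torus-graded symplectic space, has the same "graded symplectic type" as $(\g,B_F,\lambda_H)$.
\end{enumerate}
The main obstacle is to check that such graded symplectic spaces are classified by the dimensions alone. The idea is that a graded isometry is built degree by degree. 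Fix any linear isomorphisms $\phi_j:\g_j\to W_{2j-1}$ for $j\ge1$. Then define $\phi_{1-j}$ on $\g_{1-j}$ as the transpose-inverse determined by the two perfect pairings, with the sign $B(x,y)=-B(y,x)$ handled consistently. Here $j\ge1$ and $1-j\le0$ cover every degree exactly once, and there is no self-paired degree because $2j-1\neq0$. The resulting $\phi$ is an isometry intertwining $\lambda_H$ with the torus of $W$. Transporting the $SL_2$ action through $\phi$ gives the extension into $Sp(\g,B_F)$.
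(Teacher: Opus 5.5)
Your proposal is correct and follows essentially the same route as the paper. Both arguments telescope the tail sums $\sum_{m\ge j}(h_m-h_{m+1})=h_j$ to obtain the expansion and (1)$\Leftrightarrow$(2), build $W=\bigoplus_m p_m\operatorname{Sym}^{2m-1}(\C^2)$ with its invariant symplectic form, and transport the $SL_2$ action through a weight-preserving isometry; you only spell out in more detail the transpose-inverse construction on the negative weights and the uniqueness argument for (3)$\Rightarrow$(2), which the paper states briefly.
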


\begin{proof}
Set $d_m=h_m-h_{m+1}$.  The coefficient of $x^{2k-1}$ on the right side of \eqref{eq:weylexpansion} is
\[
 \sum_{m\ge k}d_m=h_k,
\]
and symmetry $h_j=h_{1-j}$ supplies the negative weights.  This proves the expansion and the equivalence of (1) and (2).

Assume (2).  Form the $SL_2$-module
\[
 W=\bigoplus_{m\ge1}d_m\operatorname{Sym}^{2m-1}(\C^2).
\]
Every odd highest-weight irreducible carries an invariant nondegenerate alternating form, so $W$ is symplectic.  Its diagonal torus has the same weight multiplicities as \eqref{eq:centeredcochar}.  The Kirillov form pairs the weight-$2m-1$ space of $\g$ perfectly with the weight-$-(2m-1)$ space and no other weight space.  Choosing isomorphisms on the positive weight spaces and the dual isomorphisms on the negative ones gives a weight-preserving symplectic isomorphism $W\simeq(\g,B_F)$.  Transporting the $SL_2$ action proves (3).

Conversely, if (3) holds, the $SL_2$ representation decomposes into irreducibles.  Since the torus weights of \eqref{eq:centeredcochar} are all odd, only odd highest-weight irreducibles can occur.  Their multiplicities are uniquely the coefficients $d_m$ in \eqref{eq:weylexpansion}, and hence are nonnegative.  The strict statement is immediate.
\end{proof}

\begin{corollary}[Unbrokenness versus Lefschetz positivity]\label{cor:unbrokenlefschetz}
Let $\g$ be a type-$A$ Frobenius seaweed whose positive Ooms support is $1,\ldots,N$, and put $p_m=h_m-h_{m+1}$ with $h_{N+1}=0$.  Then
\[
 \Theta_\g=\sum_{m=1}^{N}p_m\chi_{2m-1},
 \qquad
 h_j=\sum_{m=j}^{N}p_m>0
 \quad(1\le j\le N).
\]
Equivalently, the centered principal character has every odd weight from $-(2N-1)$ to $2N-1$.  Thus the general unbroken-spectrum theorem of Cameron--Coll--Hyatt--Magnant \cite{CCHM}, building on the earlier type-$A$ results \cite{CMW,CHM}, amounts to the strict positivity of all tail sums of the virtual primitive multiplicities, whereas strict unimodality is the stronger condition $p_m>0$ for every $m$.
\end{corollary}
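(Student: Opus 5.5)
The corollary is essentially a specialization of Theorem~\ref{thm:sl2dictionary} combined with the unbroken-spectrum theorem of \cite{CCHM}. I would verify its hypotheses first, then read off the two displayed identities, and finally interpret the positivity statements.

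First, the setting. For a type-$A$ Frobenius seaweed, Gerstenhaber--Giaquinto \cite{GG} (or Joseph \cite{Joseph}, since seaweeds are biparabolic) guarantee that the principal element is semisimple with integral eigenvalues. So Theorem~\ref{thm:sl2dictionary} applies. Since the positive support is $1,\ldots,N$, we have $h_m=0$ for $m>N$. Hence $p_m=h_m-h_{m+1}=0$ for $m>N$, and the expansion \eqref{eq:weylexpansion} truncates to $\Theta_\g=\sum_{m=1}^N p_m\chi_{2m-1}$.

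Second, the tail-sum identity. The sum $\sum_{m=j}^N p_m$ telescopes to $h_j-h_{N+1}=h_j$. This is also the coefficient computation already made in the proof of Theorem~\ref{thm:sl2dictionary}. Positivity $h_j>0$ for $1\le j\le N$ is exactly the unbroken-support theorem of \cite{CCHM}: the spectrum is an unbroken set of integers, its positive part is $\{1,\ldots,N\}$ by hypothesis, and every integer in between is occupied. Translating through $\Theta_\g(x)=x^{-1}H_\g(x^2)$ and the symmetry \eqref{eq:symmetry}, $h_j=h_{1-j}$, the occupied integers $1-N,\ldots,N$ map to the odd weights $-(2N-1),\ldots,2N-1$, all with positive coefficient. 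This gives the stated equivalence with full odd support.

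Finally, the comparison. By the strict clause of Theorem~\ref{thm:sl2dictionary}, strict unimodality is equivalent to $p_m>0$ for $1\le m\le N$. Positivity of every $p_m$ implies positivity of every tail sum, but not conversely, so the latter condition is the stronger one. No step is a real obstacle. The only point needing care is bookkeeping: the positive support must be exactly $\{1,\ldots,N\}$ (i.e.\ it starts at $1$ with no gaps), which is what unbrokenness together with the symmetry about $1/2$ provides.
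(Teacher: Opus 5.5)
Your proposal is correct and takes essentially the paper's route. The paper gives no separate proof: the corollary is read off Theorem~\ref{thm:sl2dictionary} (truncation at $N$, telescoping of the tail sums, the reindexing $j\mapsto 2j-1$ together with $h_j=h_{1-j}$), combined with the unbroken-spectrum theorem of \cite{CCHM}, and that is exactly your argument. One wording slip: in your last paragraph, ``the latter condition is the stronger one'' grammatically points to tail-sum positivity, whereas you mean that $p_m>0$ for every $m$ is the stronger condition; the paper's $\p_{19}$, with every $h_j>0$ but $p_3=-1$, shows that this implication is strict.
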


\begin{remark}[Yang--Baxter and coadjoint interpretations]\label{rem:yb}
The uncentered one-parameter action $x|_{\g_j}\mapsto t^jx$ is a Lie-algebra automorphism and scales the Kirillov form by $t$.  Its centered double cover \eqref{eq:centeredcochar} is symplectic.  If $r_F=B_F^{-1}\in\wedge^2\g$ is the nondegenerate triangular $r$-matrix associated to $F$, then
\[
 [H,r_F]=r_F,
\]
so $\lambda_H$ fixes $r_F$; see \cite{GGCYBE} for the Frobenius--CYBE correspondence.  Also $\ad_H^*F=-F$, so the principal element infinitesimally generates radial scaling of $F$ along the open coadjoint orbit.
\end{remark}

Theorem~\ref{thm:sl2dictionary} identifies the Lefschetz condition relevant to unimodality.  Frobenius duality always supplies the centered symplectic torus and the virtual $SL_2$ character.  Unimodality asks whether that virtual character is effective.  In an actual $SL_2$ decomposition, the integers
\[
 p_m=h_m-h_{m+1}
\]
are the multiplicities of highest-weight, or primitive, summands; we therefore regard them as the \emph{virtual primitive multiplicities} of the principal grading.  Negative $p_m$ are exactly the obstruction to $SL_2$ effectivity.  This is the $SL_2$ mechanism used in many classical unimodality theorems \cite{Almkvist,Stanley}.  The $SL_2$ action in Theorem~\ref{thm:sl2dictionary} is an action on the underlying symplectic vector space; it is not asserted to act by Lie-algebra automorphisms.  A good or inner Lefschetz element, when it exists, is therefore additional structure.

Let $A=(a_1,\ldots,a_u)$ and $B=(b_1,\ldots,b_v)$ be compositions of $n$, and let $\alpha(i)$ and $\beta(i)$ denote the $A$-block and $B$-block containing $i$, respectively.  Our convention is
\begin{equation}\label{eq:seaweedconvention}
 \q(A\mid B)
 =\mathfrak h
 \oplus\!\!\bigoplus_{\substack{i>j\\ \alpha(i)=\alpha(j)}}\C E_{ij}
 \oplus\!\!\bigoplus_{\substack{i<j\\ \beta(i)=\beta(j)}}\C E_{ij},
\end{equation}
where $\mathfrak h\subset\slie_n$ is the trace-zero diagonal subalgebra.  Thus the $A$-blocks contribute lower-triangular matrix units and the $B$-blocks contribute upper-triangular matrix units.  This convention is the transpose of that used in \cite{GILM}; transposition gives an isomorphic Lie algebra and preserves the principal multiplicities.

The meander of $\q(A\mid B)$ has vertices $1,\ldots,n$ in order. Within each top block, join the first vertex to the last, the second to the penultimate, and so on by nested arcs above the line. Do the same below the line for the bottom blocks. Dergachev and Kirillov proved that the seaweed is Frobenius precisely when the meander is a single path \cite{DK}.

Orient each top arc from right to left and each bottom arc from left to right. On a single-path meander choose an integer potential $\psi=(\psi_1,\ldots,\psi_n)$ satisfying
\begin{equation}\label{eq:potential}
 \psi_j-\psi_i=1\quad\text{on a top pair }i<j,
 \qquad
 \psi_i-\psi_j=1\quad\text{on a bottom pair }i<j.
\end{equation}
It is unique up to a common translation. The directed-arc functional is Frobenius \cite{CD,MR}, and
\begin{equation}\label{eq:principal}
 \widehat F
 =\diag(\psi_1,\ldots,\psi_n)
 -\frac{\sum_i\psi_i}{n}I_n
\end{equation}
is a principal element. In particular $E_{ij}$ has degree $\psi_i-\psi_j$.

For a set of vertices $S$, let
\[
 U_S(z)=\sum_{i\in S}z^{\psi_i}.
\]
The full vertex histogram $U=U_{\{1,\ldots,n\}}$ is defined only up to a monomial factor, but its autocorrelation $UU^\rv$, where $P^\rv(z)=P(z^{-1})$, is well defined.  This autocorrelation is the term that enters the full-fold construction below.

We shall also use directional block contributions. If $Q$ is a top block of size $m$, put
\[
 H_Q^{\rm top}
 =\left\lfloor\frac m2\right\rfloor
 +\sum_{\substack{i,j\in Q\\i>j}}z^{\psi_i-\psi_j};
\]
for a bottom block use the analogous sum over $i<j$. Summing over all top and bottom blocks gives $H_\g$; the constant terms allocate exactly the $n-1$ trace-zero diagonal directions.

\begin{lemma}\label{lem:evenblock}
If a top block has size $2m$ and its left-half potential histogram is $T$, then its directional contribution is
\[
 (1+z)TT^\rv.
\]
\end{lemma}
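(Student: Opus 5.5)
Within a top block $Q$ of size $2m$, the meander arcs join vertex $i$ (from the left half) to its mirror $\bar i$ (in the right half). I would index the left half by $L=\{q_1<\dots<q_m\}$ and the right half by $\bar L=\{\bar q_1,\dots,\bar q_m\}$, where $\bar q_k$ is the vertex joined to $q_k$. The top-pair condition in \eqref{eq:potential} then gives $\psi_{\bar q}=\psi_q+1$ for every $q\in L$. So the right-half histogram is $zT$, where $T=U_L$.

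Next I would split the pair sum in $H_Q^{\rm top}$ according to where $i$ and $j$ (with $i>j$) lie.
\begin{itemize}
\item Both in $L$: the sum over $j<i$ of $z^{\psi_i-\psi_j}$ contributes $S_L$, where $S_L:=\sum_{a<b\in L}z^{\psi_b-\psi_a}$.
\item Both in $\bar L$: since $\psi_{\bar q}=\psi_q+1$, the differences are the same as in $L$. The order is reversed by mirroring, so this contributes $S_L^\rv$.
\item $i\in\bar L$, $j\in L$: every pair has $i>j$, so this contributes $\sum_{a,b\in L}z^{\psi_a+1-\psi_b}=zTT^\rv$.
\end{itemize}
In the first two pieces the shift by $1$ cancels. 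I would double-check the orientation conventions to make sure the reversal in the second piece is correct; it only decides which of $S_L$, $S_L^\rv$ appears.

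Adding the constant $\lfloor 2m/2\rfloor=m$, the contribution is
\[
 m+S_L+S_L^\rv+zTT^\rv .
\]
Expanding the autocorrelation gives $TT^\rv=m+S_L+S_L^\rv$: the $m$ diagonal pairs $a=b$ give $m$, and the off-diagonal ordered pairs give $S_L$ and $S_L^\rv$. Hence the contribution equals $TT^\rv+zTT^\rv=(1+z)TT^\rv$, which is the claim.

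The main obstacle is bookkeeping rather than ideas. One must confirm that the arc orientation in \eqref{eq:potential} produces $+1$ and not $-1$ on the right half. This sign is forced by the requirement that the top arcs point right to left, so that $\psi_j-\psi_i=1$ for $i<j$. One must also confirm that the constant $\lfloor m/2\rfloor$ in the definition of $H_Q^{\rm top}$ specializes to exactly the diagonal $m$ needed to complete $TT^\rv$. With both checks in place, the identity is immediate.
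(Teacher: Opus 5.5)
Your proof is correct and is essentially the paper's argument: the nested top arcs give the right half the potentials $t_m+1,\dots,t_1+1$, the within-half pairs together with the $m$ allocated zero degrees assemble to $TT^\rv$, and the cross pairs give $zTT^\rv$. One correction to your commentary: the order reversal in the right half is not cosmetic, because completing $TT^\rv$ requires exactly one copy each of $S_L$ and $S_L^\rv$, and it is the nesting of the arcs that guarantees this.
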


\begin{proof}
If the left-half potentials are $t_1,\ldots,t_m$, then the right-half potentials, in vertex order, are $t_m+1,\ldots,t_1+1$. Pairs within the two halves, together with the $m$ allocated zero degrees, contribute $TT^\rv$. Cross pairs contribute $zTT^\rv$.
\end{proof}

\section{Endpoint extension and full fold}\label{sec:amplifier}

The counterexamples are produced by two operations on meanders.  We state the operations first and specialize the seed in Section~\ref{sec:rectangular}.

Suppose
\[
 \s=\q((1,a_2,\ldots,a_u)\mid B)\subseteq\slie_N
\]
is Frobenius. The first vertex is unmatched on top and hence is an endpoint of the path. Let $\theta$ be its potential, let $U$ be the full potential histogram, and let $H$ be the Ooms multiplicity polynomial. Put
\[
 V_\theta=z^{\theta-1}+z^\theta.
\]

Figure~\ref{fig:pathoperations} shows the two path operations schematically.  The endpoint-extension panel displays the case $r=2$; the same alternating two-edge pattern is simply repeated for general $r$.  The full-fold panel isolates one old top edge.  It is this local replacement, repeated simultaneously over all old top edges, that makes the path preservation in Proposition~\ref{prop:fold} visible.

\begin{figure}[htbp]
\centering
\begin{tikzpicture}[x=0.78cm,y=0.72cm,line cap=round,line join=round]
\begin{scope}[xshift=-5.2cm]
  \node[font=\small\bfseries] at (0,2.05) {(a) Endpoint extension};
  \draw[thin] (-2.6,0)--(2.8,0);
  \foreach \x in {-2.2,-1.2,-0.2,0.8,1.8,2.8}{\fill (\x,0) circle (1.5pt);}
  \draw[thick] (-2.2,-0.05).. controls (-2.2,-0.75) and (-1.2,-0.75)..(-1.2,-0.05);
  \draw[thick] (-0.2,-0.05).. controls (-0.2,-0.75) and (0.8,-0.75)..(0.8,-0.05);
  \draw[thick] (-1.2,0.05).. controls (-1.2,0.75) and (-0.2,0.75)..(-0.2,0.05);
  \draw[thick] (0.8,0.05).. controls (0.8,0.75) and (1.8,0.75)..(1.8,0.05);
  \draw[thick,densely dashed] (1.8,-0.05).. controls (2.15,-0.65) and (2.45,-0.65)..(2.8,-0.05);
  \node[font=\scriptsize,anchor=north] at (-0.2,-0.95) {$2r$ new vertices (shown for $r=2$)};
  \node[font=\scriptsize,anchor=south] at (1.8,0.86) {old endpoint};
  \node[font=\scriptsize,anchor=west] at (2.0,-0.48) {old path};
  \node[font=\scriptsize] at (-1.7,0.38) {bottom};
  \node[font=\scriptsize] at (-0.7,0.92) {top};
\end{scope}

\begin{scope}[xshift=4.4cm]
  \node[font=\small\bfseries] at (0,2.05) {(b) Full fold: one old top edge};
  \fill (-3.5,0.9) circle (1.5pt);
  \fill (-2.3,0.9) circle (1.5pt);
  \node[font=\scriptsize,anchor=north] at (-3.5,0.82) {$i$};
  \node[font=\scriptsize,anchor=north] at (-2.3,0.82) {$j$};
  \draw[thick] (-3.5,0.96).. controls (-3.5,1.55) and (-2.3,1.55)..(-2.3,0.96);
  \node[font=\scriptsize] at (-2.9,1.67) {old top edge};
  \draw[->,thin] (-1.75,0.92)--(-0.85,0.92);

  \draw[thin] (-0.45,0)--(3.55,0);
  \draw[densely dashed] (1.55,-0.95)--(1.55,1.55);
  \foreach \x in {0,1,2.1,3.1}{\fill (\x,0) circle (1.5pt);}
  \node[font=\scriptsize,anchor=north] at (0,-0.08) {$\bar j$};
  \node[font=\scriptsize,anchor=north] at (1,-0.08) {$\bar i$};
  \node[font=\scriptsize,anchor=north] at (2.1,-0.08) {$i$};
  \node[font=\scriptsize,anchor=north] at (3.1,-0.08) {$j$};
  \draw[thick] (0,-0.05).. controls (0,-0.72) and (1,-0.72)..(1,-0.05);
  \draw[thick] (1,0.05).. controls (1,0.78) and (2.1,0.78)..(2.1,0.05);
  \draw[thick] (0,0.05).. controls (0,1.28) and (3.1,1.28)..(3.1,0.05);
  \node[font=\scriptsize,anchor=north] at (0.5,-0.70) {old top edge, copied below};
  \node[font=\scriptsize] at (2.05,1.45) {new top block};
  \node[font=\scriptsize,anchor=north] at (1.55,-1.12) {$i\;---\;\bar i\;---\;\bar j\;---\;j$};
\end{scope}
\end{tikzpicture}
\caption{The two operations on meanders.  In (a), endpoint extension attaches an alternating chain to the old top-unmatched endpoint, so a single path remains a single path.  In (b), an old top edge $i$--$j$ is replaced after folding by the three-edge path $i$--$\bar i$--$\bar j$--$j$.  The two outer edges belong to the new top block, while the middle edge is the reversed copy of the old top edge and lies in a bottom block.  The same pairing of the two copies produces the autocorrelation term in Proposition~\ref{prop:fold}.}
\label{fig:pathoperations}
\end{figure}

The figure also indicates the spectral role of the full fold.  The new $2N$-block pairs the potential histogram with its reversed copy, producing the autocorrelation term $(1+z)WW^\rv$ in the new Ooms multiplicity polynomial.

\begin{proposition}[Endpoint extension]\label{prop:extension}
For $r\ge0$, define
\[
 \s_r
 =\q((1,2^{[r]},a_2,\ldots,a_u)\mid(2^{[r]},B)).
\]
Then $\s_r$ is Frobenius. Its full histogram and Ooms multiplicity polynomial are
\begin{equation}\label{eq:extension}
 U_r=U+rV_\theta,
 \qquad
 H_{\s_r}=H+2r(1+z).
\end{equation}
\end{proposition}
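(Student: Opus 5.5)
The plan is to write down the meander of $\s_r$ explicitly, check that it is still a single path, and then read off the potential and the directional block contributions. Label the vertices of $\s_r$ by $1,\ldots,N+2r$, so that old vertex $k$ becomes new vertex $k+2r$. In the top composition $(1,2^{[r]},a_2,\ldots,a_u)$:
\begin{itemize}
\item vertex $1$ is a singleton block;
\item the blocks $2^{[r]}$ give the top arcs $(2,3),(4,5),\ldots,(2r,2r+1)$;
\item the remaining top blocks sit on the old vertices with their old arcs, shifted by $2r$.
\end{itemize}
In the bottom composition $(2^{[r]},B)$, the blocks $2^{[r]}$ give the bottom arcs $(1,2),(3,4),\ldots,(2r-1,2r)$, and $B$ sits on the shifted old vertices. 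Old vertex $1$, now vertex $2r+1$, lay in a top singleton block in $\s$ and so had no top arc. In $\s_r$ it acquires exactly the top arc to $2r$, and its bottom arc and all other old arcs are unchanged. Hence the new meander is the old path, still a single path, with the chain
\[
1\;\text{--}\;2\;\text{--}\;3\;\text{--}\;\cdots\;\text{--}\;2r\;\text{--}\;2r+1
\]
glued onto its old endpoint, with arcs alternating bottom and top. The result is again a single path, now with endpoint $1$ unmatched on top. By Dergachev--Kirillov \cite{DK}, $\s_r$ is Frobenius.

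Next I solve \eqref{eq:potential} along the new chain, keeping the old potentials (with $\psi_{2r+1}=\theta$). This is legitimate because the potential is unique up to a common translation and the old arcs impose the old relations. The top pair $(2r,2r+1)$ forces $\psi_{2r}=\theta-1$. The bottom pair $(2r-1,2r)$ forces $\psi_{2r-1}=\theta$. Continuing down the chain, the new vertices alternate
\[
\psi_{2k-1}=\theta,\qquad \psi_{2k}=\theta-1\qquad(1\le k\le r).
\]
Each pair of new vertices therefore contributes $V_\theta=z^{\theta-1}+z^\theta$ to the histogram, so $U_r=U+rV_\theta$. By \eqref{eq:principal}, the principal element is again diagonal with integral degrees $\psi_i-\psi_j$, so the directional block decomposition applies.

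For $H_{\s_r}$, I sum directional contributions block by block. Every old block other than the top singleton at old vertex $1$ has the same vertices, in the same order and with the same potentials, so its contribution is unchanged. The old singleton contributed $\lfloor 1/2\rfloor=0$ and no pairs, and the new singleton at vertex $1$ also contributes $0$. Each new top pair $(2k,2k+1)$ contributes $\lfloor 2/2\rfloor+z^{\psi_{2k+1}-\psi_{2k}}=1+z$. Each new bottom pair $(2k-1,2k)$ contributes $1+z^{\psi_{2k-1}-\psi_{2k}}=1+z$. Summing gives $H_{\s_r}=H+2r(1+z)$.

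As a sanity check, the constant terms grow by $2r$, which matches the growth of $n-1$ from $N-1$ to $N+2r-1$, and the degree-$1$ terms grow by $2r$, consistent with \eqref{eq:symmetry}. The main risk in the argument is orientation bookkeeping: confirming that old vertex $1$'s only new arc is the top arc to $2r$, and that the signs in \eqref{eq:potential} give $+1$ rather than $-1$ for both kinds of new pairs. I would verify these directly for $r=1$ before writing the general chain argument.
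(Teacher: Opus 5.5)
Your proposal is correct and follows the paper's proof essentially step for step. Like the paper, you glue the alternating bottom/top chain onto the old top-unmatched endpoint to keep a single path, extend the potential as $(\theta,\theta-1,\ldots,\theta-1,\theta,\psi_2,\ldots,\psi_N)$ to get $U_r=U+rV_\theta$, and count $2r$ new size-two blocks each contributing $1+z$, with all old nonsingleton contributions unchanged.
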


\begin{proof}
Insert $2r$ new vertices before the old path. The new bottom pairs are $(2j-1,2j)$ and the new top pairs are $(2j,2j+1)$, $1\le j\le r$. They form the alternating chain shown in Figure~\ref{fig:pathoperations}(a), attached to the old endpoint, so the meander remains a single path. The corresponding potential is
\[
 (\theta,\theta-1,\theta,\ldots,\theta-1,\theta,\psi_2,\ldots,\psi_N).
\]
Each of the $2r$ new size-two directional blocks contributes $1+z$, while the old nonsingleton block contributions are unchanged. This proves \eqref{eq:extension}.
\end{proof}

For a composition $A=(a_1,\ldots,a_u)$, write $A^{\rm rev}=(a_u,\ldots,a_1)$.

\begin{proposition}[Full fold]\label{prop:fold}
Let $\q(A\mid B)\subseteq\slie_N$, $N\ge2$, be Frobenius, with full histogram $W$ and Ooms multiplicity polynomial $J$. Then
\[
 \mathcal D\q(A\mid B)=\q((2N)\mid(A^{\rm rev},B))
\]
is a Frobenius parabolic, and
\begin{equation}\label{eq:fold}
 H_{\mathcal D\q(A\mid B)}=J+(1+z)WW^\rv.
\end{equation}
\end{proposition}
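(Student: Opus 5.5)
The plan is to make the meander of $\mathcal D\q(A\mid B)$ explicit, read off a potential from the old one, and then assemble $H$ from directional block contributions.

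\textbf{Labelling the vertices.} Label the $2N$ vertices of the new meander as follows. For $1\le i\le N$, vertex $N+i$ is the copy of the old vertex $i$, and vertex $N+1-i$ is denoted $\bar i$. The single top block $(2N)$ then joins $\bar i$ to $i$ for every $i$.

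\textbf{The new arcs.} The bottom composition is $(A^{\rm rev},B)$.
\begin{itemize}
\item The last $N$ vertices carry the old bottom blocks $B$ in their original positions, so the old bottom arcs reappear unchanged among the unbarred vertices.
\item The first $N$ vertices carry $A^{\rm rev}$. Reflection $i\mapsto N+1-i$ carries each block of $A$ to the corresponding block of $A^{\rm rev}$ and preserves nesting. Hence each old top arc $i$--$j$ reappears as a bottom arc $\bar j$--$\bar i$.
\end{itemize}

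\textbf{Single path.} The new meander graph is the old one with every old vertex $i$ replaced by an edge $\bar i$--$i$. The old bottom edge at $i$ now attaches to $i$, and the old top edge at $i$ now attaches to $\bar i$. Each new vertex therefore has degree at most $2$. The graph is connected because the old graph is. It has $(N-1)+N=2N-1$ edges on $2N$ vertices, so it is a tree with maximum degree $2$, that is, a single path.

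By Dergachev--Kirillov \cite{DK} the algebra is Frobenius. It is parabolic because the top composition consists of one block.

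\textbf{The potential.} Keep $\psi_i$ on vertex $i$ and put $\psi_{\bar i}=\psi_i-1$. I then check \eqref{eq:potential} arc by arc.
\begin{itemize}
\item For the top pair $\bar i<i$: $\psi_i-\psi_{\bar i}=1$.
\item For an old top pair $i<j$, where $\psi_j-\psi_i=1$, the new bottom pair is $\bar j<\bar i$. Here $\psi_{\bar j}-\psi_{\bar i}=\psi_j-\psi_i=1$, as the bottom condition requires.
\item The $B$ arcs are unchanged, so their condition holds.
\end{itemize}
Since the potential is unique up to translation, the new full histogram is $W+z^{-1}W$, and the left half of the big block has histogram $T=z^{-1}W$.

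\textbf{The spectrum.} By \eqref{eq:principal}, $H$ is the sum of directional block contributions.
\begin{itemize}
\item The $B$ blocks contribute exactly what they did before.
\item Each reflected $A$-block is now a bottom block, so its contribution sums over new positions $p<q$. These correspond to old vertices $i>j$ with degree $(\psi_i-1)-(\psi_j-1)=\psi_i-\psi_j$. This is precisely the old top-block contribution, and the floor constants agree. Together with the $B$ blocks this gives $J$.
\item The top block has size $2N$, its left half is the barred vertices in the order $\bar N,\dots,\bar 1$, and the right half has potentials $\psi_1,\dots,\psi_N=t+1$ in the reversed order, as Lemma~\ref{lem:evenblock} requires. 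The lemma gives $(1+z)TT^\rv=(1+z)WW^\rv$, because the monomial shift cancels in the autocorrelation.
\end{itemize}

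\textbf{Consistency check.} The constants allocate $N-1$ Cartan directions from $J$ and $N$ from the big block, for a total of $2N-1$, as required in $\slie_{2N}$. This proves \eqref{eq:fold}.

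The main obstacle is the orientation bookkeeping: confirming that reversing $A$ and moving it below the line turns the old top-arc potential condition into exactly the new bottom-arc condition, with the uniform shift by $-1$. Once that is settled, the graph argument and the spectral computation are routine.
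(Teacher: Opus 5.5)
Your proof is correct and follows the paper's argument: you use the same folded potential $\psi_{\bar i}=\psi_i-1$, the same arc-by-arc check that old top arcs reappear as bottom arcs $\bar j<\bar i$ with the right orientation, the same identification of the reflected $A$-blocks and the unchanged $B$-blocks with $J$, and Lemma~\ref{lem:evenblock} applied to the left half with histogram $z^{-1}W$. The only difference is cosmetic and comes in the single-path step. There you split each old vertex into an edge $\bar i$--$i$ and conclude by an edge count (connected, $2N-1$ edges, maximum degree $2$). This is the same graph the paper describes as subdividing each old top edge into $i$--$\bar i$--$\bar j$--$j$, and your count makes the paper's ``no cycle'' claim explicit.
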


\begin{proof}
Write the vertices of the folded meander as a reversed left copy and an ordinary right copy of the old vertex set.  If the old potential is $(w_1,\ldots,w_N)$, assign the folded potentials
\begin{equation}\label{eq:foldpotential}
 (w_N-1,\ldots,w_1-1\mid w_1,\ldots,w_N).
\end{equation}
Thus the left copy of the old vertex $i$ has potential $w_i-1$, while the right copy has potential $w_i$.

The new top composition has one block of size $2N$; its nested top arcs pair the two copies of each old vertex $i$.  Equation~\eqref{eq:foldpotential} therefore gives potential difference $1$ on every new top arc.  On the right, the bottom blocks are exactly the old $B$-blocks, so the old bottom arcs and their potential equations are unchanged.  On the left, reversing the order of the old $A$-blocks turns each old top pair $i<j$ into a bottom pair between the reversed copies of $j$ and $i$; its potential difference is
\[
 (w_j-1)-(w_i-1)=w_j-w_i=1,
\]
which is precisely the required bottom-arc equation.

The path structure is equally transparent from Figure~\ref{fig:pathoperations}(b).  If $i$ and $j$ were joined by an old top arc, then in the folded meander that single edge is replaced by the three-edge path
\[
 i\;---\;\overline{i}\;---\;\overline{j}\;---\;j,
\]
where bars denote vertices in the reversed left copy.  Old bottom edges remain unchanged in the right copy.  If an old vertex was unmatched on top, the new top block contributes one terminal edge joining it to its reversed copy.  Consequently the fold only subdivides old top edges and extends old top endpoints; it neither creates a cycle nor disconnects the graph.  Since the original meander is a single path, the folded meander is a single path as well.

It remains to compute the Ooms multiplicity polynomial.  The left bottom blocks reproduce, degree for degree, the contributions of the old top blocks, while the right bottom blocks reproduce the old bottom contributions.  Together they contribute exactly $J$.  The left half of the new even top block has potential histogram $z^{-1}W$.  Lemma~\ref{lem:evenblock} therefore gives the additional contribution
\[
 (1+z)(z^{-1}W)(zW^\rv)=(1+z)WW^\rv.
\]
Adding the two contributions proves \eqref{eq:fold}.
\end{proof}

Combining the two operations gives a useful first-difference criterion.

\begin{proposition}[Amplification criterion]\label{prop:amplification}
Fold $\s_r$ from Proposition~\ref{prop:extension}, and call the resulting parabolic $\p_r$. Then
\begin{equation}\label{eq:mastergeneral}
 H_{\p_r}
 =H+2r(1+z)+(1+z)(U+rV_\theta)(U^\rv+rV_\theta^\rv).
\end{equation}
Write
\[
 UU^\rv=\sum_tc_tz^t,
 \qquad
 UV_\theta^\rv+V_\theta U^\rv=\sum_tb_tz^t.
\]
For every $j\ge3$,
\begin{equation}\label{eq:ampdiff}
 \begin{split}
 [z^j]H_{\p_r}-[z^{j+1}]H_{\p_r}
 ={}&H_j-H_{j+1}+c_{j-1}-c_{j+1}\\
 &+r(b_{j-1}-b_{j+1}).
 \end{split}
\end{equation}
In particular, if $b_{j-1}<b_{j+1}$ for some $j\ge3$, then the spectra of $\p_r$ are nonunimodal for all sufficiently large $r$.
\end{proposition}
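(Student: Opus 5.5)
The identity \eqref{eq:mastergeneral} follows by composing the two operations already established. Proposition~\ref{prop:extension} shows that $\s_r$ is Frobenius, with full histogram $U_r=U+rV_\theta$ and Ooms polynomial $H+2r(1+z)$. Applying Proposition~\ref{prop:fold} to $\s_r$, with $W=U_r$ and $J=H+2r(1+z)$, shows that $\p_r$ is a Frobenius parabolic and gives
\[
 H_{\p_r}=H+2r(1+z)+(1+z)U_rU_r^\rv .
\]
This is exactly \eqref{eq:mastergeneral}. One hypothesis must be checked: $\s_r$ has to be written as $\q(A\mid B)$ in $\slie_{N+2r}$ with $N+2r\ge2$. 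This holds whenever $\s$ itself lies in $\slie_N$ with $N\ge2$.

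For the first differences, expand
\[
 U_rU_r^\rv=UU^\rv+r\,(UV_\theta^\rv+V_\theta U^\rv)+r^2V_\theta V_\theta^\rv .
\]
Since $V_\theta V_\theta^\rv=(z^{\theta-1}+z^\theta)(z^{1-\theta}+z^{-\theta})=z^{-1}+2+z$, the $r^2$ term is supported in degrees $-1,0,1$. After multiplying by $(1+z)$, the $r^2$ contribution $r^2(z^{-1}+3+3z+z^2)$ is therefore supported in degrees $-1,\dots,2$. The term $2r(1+z)$ lives in degrees $0,1$. For $j\ge3$, both $j$ and $j+1$ lie above degree $2$, so neither of these terms contributes to $[z^j]$ or $[z^{j+1}]$.

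The remaining terms are handled with the elementary identity $[z^k]\bigl((1+z)P\bigr)=p_k+p_{k-1}$. Applying it to $P=UU^\rv+r\sum_t b_tz^t$ gives
\[
 [z^j]-[z^{j+1}]=(p_j+p_{j-1})-(p_{j+1}+p_j)=p_{j-1}-p_{j+1}.
\]
Adding $H_j-H_{j+1}$ yields \eqref{eq:ampdiff}.

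For the final assertion, suppose $b_{j-1}<b_{j+1}$ for some $j\ge3$. Then the right side of \eqref{eq:ampdiff} is an affine function of $r$ with negative slope. Hence $[z^j]H_{\p_r}<[z^{j+1}]H_{\p_r}$ for all large $r$. Since $j\ge3\ge1$, this violates the unimodality criterion $[z^j]H\ge[z^{j+1}]H$ for $j\ge1$ stated after \eqref{eq:symmetry}; the criterion applies because $\p_r$ is a Frobenius seaweed.

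The only real care point is the degree bookkeeping. One must confirm that the quadratic term and the $2r(1+z)$ term are confined to degrees at most $2$, and this is precisely why the statement assumes $j\ge3$.
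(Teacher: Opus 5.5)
Your proof is correct and follows essentially the same route as the paper. You compose Propositions~\ref{prop:extension} and~\ref{prop:fold} to obtain \eqref{eq:mastergeneral}, observe that the $r^2$ term and $2r(1+z)$ live in degrees $\le 2$, and apply $[z^j](1+z)P-[z^{j+1}](1+z)P=p_{j-1}-p_{j+1}$ to what remains. Compared with the paper you are slightly more explicit: you derive \eqref{eq:mastergeneral} directly, you use the coefficient identity for an arbitrary Laurent polynomial (correctly, since the paper's reciprocity hypothesis is not actually needed), and you tie the final step to the unimodality criterion stated after \eqref{eq:symmetry}.
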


\begin{proof}
The only quadratic term in $r$ is
\[
 r^2(1+z)V_\theta V_\theta^\rv,
 \qquad
 V_\theta V_\theta^\rv=2+z+z^{-1},
\]
which is supported in degrees $-1,0,1,2$. The term $2r(1+z)$ is also central. For a reciprocal polynomial $C=\sum c_tz^t$,
\[
 [z^j](1+z)C-[z^{j+1}](1+z)C=c_{j-1}-c_{j+1}.
\]
Applying this to \eqref{eq:mastergeneral} proves \eqref{eq:ampdiff}.
\end{proof}

Thus a seed with a unimodal Ooms spectrum can generate an infinite counterexample family: the extension leaves all noncentral Ooms multiplicities fixed but changes the autocorrelation term introduced by the fold.

\section{Rectangular seeds}\label{sec:rectangular}

We now exhibit a two-parameter seed family for which every ingredient in the amplification formula can be written explicitly. Put
\[
 I_m=1+z+\cdots+z^{m-1},
 \qquad
 C_m=I_mI_m^\rv,
 \qquad
 B_m=C_m+zI_m.
\]

\begin{theorem}[Rectangular seed]\label{thm:seed}
For every $a,b\ge2$, the seaweed
\begin{equation}\label{eq:seedfamily}
 \s_{a,b}=\q((1,a-1,a(b-1))\mid(ab))
\end{equation}
is Frobenius. It admits a potential supported on $[0,a+b-2]$ whose full histogram and Ooms multiplicity polynomial are
\begin{equation}\label{eq:seedformulas}
 U_{a,b}=I_aI_b,
 \qquad
 H_{a,b}=B_{a-1}+C_aB_{b-1}.
\end{equation}
The Ooms spectrum is strictly unimodal, and
\[
 \dim\s_{a,b}=a(a-1)+a^2b(b-1).
\]
\end{theorem}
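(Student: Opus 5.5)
The plan is to pin down an explicit potential on the meander of $\s_{a,b}$, read off $U_{a,b}$ and $H_{a,b}$ block by block, and then settle unimodality by writing everything as products of interval polynomials.

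\emph{Meander and potential.} The meander of $\s_{a,b}$ has the bottom block $(ab)$, with arcs $i\leftrightarrow ab+1-i$. On top it has the singleton $\{1\}$, the block $\{2,\ldots,a\}$ with arcs $k\leftrightarrow a+2-k$, and the big block $\{a+1,\ldots,ab\}$ with arcs $k\leftrightarrow ab+a+1-k$. Composing a bottom arc with a big-block top arc sends $i\mapsto ab+1-i\mapsto i+a$. So away from the first $a$ vertices the path moves along the residue classes of $\{1,\ldots,ab\}$ modulo $a$, which I view as $a$ columns of length $b$. The top arcs of $\{2,\ldots,a\}$ and the bottom arcs with an endpoint in $\{1,\ldots,a\}$ then connect these columns at their ends.

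I will trace the path starting at the endpoint $1$, which is unmatched on top. The goal is to check that it visits all $ab$ vertices, which gives the Frobenius property by Dergachev--Kirillov, and to integrate the potential equations \eqref{eq:potential} along it. Each two-step move $i\mapsto i+a$ changes $\psi$ by a fixed $\pm1$ in one orientation, and each column-to-column move changes it by the opposite unit. After normalization this should give a labelling $\psi=p+q$ with $(p,q)\in[0,a-1]\times[0,b-1]$, each pair occurring exactly once. Hence $U_{a,b}=I_aI_b$, supported on $[0,a+b-2]$. I expect a short induction on $b$, peeling off one column layer, to be the cleanest way to make this rigorous.

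\emph{Block contributions.} The bottom block has even size $ab$, so the bottom analogue of Lemma~\ref{lem:evenblock} gives $(1+z)TT^\rv$, where $T$ is the histogram of the left half $\{1,\ldots,ab/2\}$ (or its odd-size analogue). The two top blocks contribute $H^{\rm top}_Q$ computed from the explicit potentials.
\begin{itemize}
\item The block $\{2,\ldots,a\}$ carries consecutive potentials, so its contribution is $B_{a-1}=C_{a-1}+zI_{a-1}$.
\item The big block, together with the bottom block, should combine to $C_aB_{b-1}$: the column structure makes each pair sum factor as an $I_aI_a^\rv$ part times a one-column part equal to $B_{b-1}$.
\end{itemize}
Summing gives \eqref{eq:seedformulas}. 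Since $B_m(1)=m^2+m$ and $C_a(1)=a^2$, evaluating $H_{a,b}(1)$ gives $\dim\s_{a,b}=a(a-1)+a^2b(b-1)$.

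\emph{Unimodality.} Observe that $B_m=z^{-(m-1)}I_mI_{m+1}$ and $C_a=z^{-(a-1)}I_a^2$. Hence
\[
C_aB_{b-1}=z^{-(a+b-3)}I_a^2I_{b-1}I_b,
\]
a shifted product of interval polynomials. Such a product is symmetric and log-concave, hence unimodal about its centre $1/2$. Likewise $B_{a-1}$ has positive-degree coefficients $a-1,a-2,\ldots,1$, which are strictly decreasing. So the sum is unimodal, and it remains to prove strictness.

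I expect strictness to be the main obstacle, because products of intervals can have plateaus: for instance $I_{b-1}I_b$ has a flat centre of width $2$. My first attempt will be to show that convolving with $I_a^2$ forces the first differences of $I_a^2I_{b-1}I_b$ to be strictly positive on the increasing side, except at the forced central tie, by writing $(1-z)I_a^2I_{b-1}I_b=(1-z^a)I_aI_{b-1}I_b$ and checking positivity on the relevant range. Any remaining flat spot near the top degree $a+b-2$ should then be broken by the strictly decreasing summand $B_{a-1}$, or ruled out by a direct look at the last few coefficients.
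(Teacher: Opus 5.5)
\textbf{There is a genuine gap.} Your unimodality step is sound. The two steps that carry the content of the theorem, the potential and the formula for $H_{a,b}$, are not established, and the mechanisms you describe for both are wrong.

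\emph{The potential.} A two-step move $i\mapsto m=ab+1-i\mapsto i+a$ does not change $\psi$ by a fixed unit. If $i<m$ and $i+a<m$, the bottom equation gives $\psi_m=\psi_i-1$ and the top equation gives $\psi_{i+a}=\psi_m-1$, so the change is $-2$. The change is $+2$ once $m<i$. Where the column crosses the middle ($i<m<i+a$) it is $0$, or $+1$ across the bottom-unmatched vertex when $ab$ is odd. For $a=b=2$ the potential is $(2,1,0,1)$, so $\psi_3-\psi_1=-2$. Moreover, the path from vertex $1$ is $1,ab,a+1,ab-a,2a+1,\dots$, which alternates between the mirror columns $c$ and $a+1-c$ rather than running along one column. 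Integrating your rule along the path therefore gives the wrong labels. ``Should give $\psi=p+q$'' together with an unspecified induction on $b$ is not an argument; note that every bottom arc changes when $b-1$ becomes $b$, so no obvious induction is available.

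The missing ingredient is a closed form that can be checked arc by arc. In zero-based labels $i=ak+r$ the paper takes
\[
 \psi_{ak+r}=\zeta_b(k)+\begin{cases}\zeta_a(r),&2k<b,\\ \zeta_a(a-1-r),&2k\ge b,\end{cases}
 \qquad
 \zeta_m(u)=\begin{cases}m-1-2u,&2u<m,\\ 2u-m,&2u\ge m,\end{cases}
\]
which is V-shaped along each column. The arc equations reduce to two reflection identities for $\zeta_m$. Since each $\zeta_m$ permutes $\{0,\dots,m-1\}$, $U_{a,b}=I_aI_b$ and the support $[0,a+b-2]$ follow at once.

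Connectivity is also cleaner through the involutions than by tracing. The map $\beta\tau$ decrements $k$, and at $k=0$ it wraps to $k=b-1$ while decrementing $r$ cyclically, so it is a single $ab$-cycle. There are exactly two odd parts, so the meander has $ab-1$ edges and is therefore a path.

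\emph{The block contributions.} Your attribution is false. The directional contribution of $\{2,\dots,a\}$ has total $\lfloor(a-1)/2\rfloor+\binom{a-1}{2}<a(a-1)=B_{a-1}(1)$. For $a=2$ it is a singleton contributing $0$, not $B_1=1+z$. Correspondingly, the big top block and the bottom block together do not give $C_aB_{b-1}$: for $a=b=2$ they give $z^{-1}+4+4z+z^2$, whereas $C_2B_1=z^{-1}+3+3z+z^2$. The reason is that the bottom block $(ab)$ contains every upper-triangular $E_{ij}$, including those joining different top blocks. These cross terms are exactly what produce $zI_{a-1}$ and $zI_{b-1}C_a$.

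The workable bookkeeping groups terms by ordered pairs of top blocks:
\[
 H_{a,b}=\sum_{Q}X_QX_Q^\rv+\sum_{Q<Q'}X_QX_{Q'}^\rv-1,
\]
with $X=z^{a+b-2}$, $Y=z^{b-1}I_{a-1}$, $Z=I_aI_{b-1}$ read off from the potential. Then $XX^\rv=1$ cancels the $-1$, $YY^\rv+XY^\rv=B_{a-1}$, and $ZZ^\rv+(X+Y)Z^\rv=C_aB_{b-1}$ because $X+Y=z^{b-1}I_a$.

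\emph{Unimodality.} Once $H_{a,b}$ is in hand, your route via interval-product factorizations and log-concavity is a valid alternative to the paper's explicit difference formula. Strictness is easier than you expect:
\begin{itemize}
\item For $1\le j\le a-1$, the summand $B_{a-1}$ drops by $1$.
\item For $a\le j\le a+b-2$, the drop $[z^j]-[z^{j+1}]$ of $C_aB_{b-1}$ equals $[z^d](1-z^a)I_aI_{b-1}I_b$ with $d=a+b-2-j\in[0,b-2]$. This is positive because $I_aI_{b-1}I_b$ strictly increases on $[0,b-2]$.
\end{itemize}
Your fallback of breaking a flat spot near degree $a+b-2$ with $B_{a-1}$ could not work, since $B_{a-1}$ vanishes in degrees $\ge a$. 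It is not needed.
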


\begin{proof}
Use zero-based vertex labels $i=ak+r$ with $0\le k<b$ and $0\le r<a$, and set $n=ab$. Let $\tau$ and $\beta$ be the involutions defined by the top and bottom block pairings. Then
\[
 \tau(0)=0,
 \quad
 \tau(i)=a-i\ (1\le i<a),
 \quad
 \tau(i)=n+a-1-i\ (a\le i<n),
\]
and $\beta(i)=n-1-i$. Hence
\[
 (\beta\tau)(0)=n-1,
 \quad
 (\beta\tau)(i)=n-a-1+i\ (1\le i<a),
 \quad
 (\beta\tau)(i)=i-a\ (a\le i<n).
\]
In grid coordinates this decrements $k$ when $k>0$; at $k=0$ it returns to $k=b-1$ and decrements $r$ cyclically. Thus $\beta\tau$ is a single $ab$-cycle and the meander is connected. Across the two compositions there are exactly two odd parts, so the graph has $ab-1$ edges. Since every vertex has degree at most two, the connected graph is a single path.

For an explicit potential, define
\[
 \zeta_m(u)=
 \begin{cases}
 m-1-2u,&2u<m,\\
 2u-m,&2u\ge m,
 \end{cases}
\]
and put
\begin{equation}\label{eq:gridpotential}
 \psi_{ak+r}
 =\zeta_b(k)+
 \begin{cases}
 \zeta_a(r),&2k<b,\\
 \zeta_a(a-1-r),&2k\ge b.
 \end{cases}
\end{equation}
The elementary reflection identities
\[
 \zeta_m(m-1-u)-\zeta_m(u)=-\operatorname{sgn}(m-1-2u)
\]
and
\[
 \zeta_m(m-u)-\zeta_m(u)=\operatorname{sgn}(m-2u),\qquad1\le u<m,
\]
verify every top and bottom potential equation in \eqref{eq:potential}. For fixed $k$, the second summand in \eqref{eq:gridpotential} takes each value $0,\ldots,a-1$ once, while the first summands take $0,\ldots,b-1$ once. Hence $U_{a,b}=I_aI_b$.

The three top-block histograms are
\[
 X=z^{a+b-2},
 \qquad
 Y=z^{b-1}I_{a-1},
 \qquad
 Z=I_aI_{b-1}.
\]
Since the bottom composition has one part, direct triangular counting gives
\[
 H_{a,b}=XX^\rv+YY^\rv+ZZ^\rv+XY^\rv+XZ^\rv+YZ^\rv-1.
\]
Now
\[
 YY^\rv=C_{a-1},\quad
 ZZ^\rv=C_aC_{b-1},\quad
 XY^\rv=zI_{a-1},\quad
 (X+Y)Z^\rv=zI_{b-1}C_a,
\]
which gives \eqref{eq:seedformulas} and the dimension formula.

It remains to record strictness. The cross-block part is
\[
 F=zI_{a-1}+zI_{b-1}C_a.
\]
The remaining terms are reciprocal, and the symmetry $H=zH^\rv$ gives
\begin{equation}\label{eq:seeddiff}
 H_j-H_{j+1}
 =\one_{j\le a-1}
 +\sum_{v=1}^{b-1}\bigl((a-|j-v|)_+-(a-j-v)_+\bigr).
\end{equation}
Every summand is nonnegative. For $1\le j\le a+b-2$, some $v\in[1,b-1]$ satisfies $|j-v|\le a-1$, and the corresponding summand is positive. Thus every outward first difference is positive.
\end{proof}

Apply the extension and fold of Section~\ref{sec:amplifier} to \eqref{eq:seedfamily}. Since the first endpoint has potential $D=a+b-2$, put
\[
 V=z^{D-1}+z^D.
\]
We obtain exactly the parabolics \eqref{eq:introfamily}.

\begin{corollary}\label{cor:masterrect}
Every $\p_{a,b,r}$ in \eqref{eq:introfamily} is Frobenius, and
\begin{equation}\label{eq:masterrect}
 \boxed{
 H_{a,b,r}
 =B_{a-1}+C_aB_{b-1}+2r(1+z)
 +(1+z)(I_aI_b+rV)(I_a^\rv I_b^\rv+rV^\rv).}
\end{equation}
Its support is $[-D,D+1]$, where $D=a+b-2$, and
\begin{equation}\label{eq:dimrect}
 \dim\p_{a,b,r}
 =8r^2+(8ab+4)r+a(a-1)+a^2b(3b-1).
\end{equation}
\end{corollary}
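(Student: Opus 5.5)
The corollary should follow by composing Theorem~\ref{thm:seed} with Propositions~\ref{prop:extension}, \ref{prop:fold}, and \ref{prop:amplification}.

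\emph{Identify the algebra.} Start from the seed $\s_{a,b}=\q((1,a-1,a(b-1))\mid(ab))$. It is Frobenius and has a potential on $[0,D]$ with histogram $U=I_aI_b$. Under the grid potential \eqref{eq:gridpotential} the first vertex ($k=r=0$) has $\psi_0=\zeta_b(0)+\zeta_a(0)=(b-1)+(a-1)=D$, so $\theta=D$ and $V_\theta=V$. The first top part is $1$, so Proposition~\ref{prop:extension} applies. It gives the Frobenius seaweed
\[
 \s_r=\q((1,2^{[r]},a-1,a(b-1))\mid(2^{[r]},ab))\subseteq\slie_{ab+2r}.
\]
Apply the full fold of Proposition~\ref{prop:fold} with $N=ab+2r$. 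The top composition becomes $(2ab+4r)$, and the reversed old top composition $(a(b-1),a-1,2^{[r]},1)$ is followed by the old bottom $(2^{[r]},ab)$. This is exactly \eqref{eq:introfamily}, so $\p_{a,b,r}$ is a Frobenius parabolic.

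\emph{The multiplicity polynomial.} Substitute $H=B_{a-1}+C_aB_{b-1}$, $U=I_aI_b$, and $V_\theta=V$ into \eqref{eq:mastergeneral}. This gives \eqref{eq:masterrect} directly.

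\emph{Support.} The term $(1+z)UU^\rv$ is supported on $[-D,D+1]$, since $UU^\rv$ lives on $[-D,D]$ with positive end coefficients. The other pieces lie inside this interval:
\begin{itemize}
\item[] $B_{a-1}$ lies in $[-(a-2),a-1]$;
\item[] $C_aB_{b-1}$ lies in $[-(a-1)-(b-2),\,(a-1)+(b-1)]=[-D+1,\,D]$;
\item[] the $r$-terms $(1+z)(rUV^\rv+rVU^\rv+r^2VV^\rv)$ have $UV^\rv$ in degrees $[-D,1]$, and $VU^\rv$, $VV^\rv$ similarly stay within $[-D,D+1]$.
\end{itemize}
All coefficients are nonnegative, so there is no cancellation and the support is exactly $[-D,D+1]$.

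\emph{Dimension.} Evaluate \eqref{eq:masterrect} at $z=1$. Here $I_m(1)=m$, $C_m(1)=m^2$, $B_m(1)=m^2+m$, and $V(1)=2$. This gives
\[
 \dim\p_{a,b,r}=a(a-1)+a^2b(b-1)+4r+2(ab+2r)^2,
\]
and expanding recovers \eqref{eq:dimrect}: $8r^2+(8ab+4)r+a(a-1)+a^2b(3b-1)$.

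\emph{Main care point.} The only delicate step is confirming that the endpoint potential is $\theta=D$ rather than $0$. The normalization of $\psi$ is only up to translation, so this must be read off from the specific potential \eqref{eq:gridpotential}. Once $\theta=D$ is fixed, everything else is bookkeeping.
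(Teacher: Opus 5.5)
Your proposal is correct and follows the paper's route: the paper also gets the corollary by applying endpoint extension and then the full fold (Proposition~\ref{prop:amplification}) to the rectangular seed. It likewise reads the endpoint potential $\theta=D$ off \eqref{eq:gridpotential}, so \eqref{eq:masterrect} is \eqref{eq:mastergeneral} with $H=B_{a-1}+C_aB_{b-1}$, $U=I_aI_b$, and $V_\theta=V$. Your support and dimension checks are the bookkeeping the paper leaves implicit; for the support to be the whole interval you need $UU^\rv$ positive at every degree of $[-D,D]$, not only at the ends, which holds because $I_aI_b$ has no internal zeros.
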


\section{Arbitrarily long outward rises}\label{sec:long}

We now specialize to square seeds, where the correlation terms in \eqref{eq:masterrect} reduce to cubic polynomials.

Let $a=b=t\ge6$, so $U=I_t^2$ and $D=2t-2$. Write
\[
 UU^\rv=\sum_dc_dz^d.
\]
For $0\le d\le t$, coefficient extraction from
\[
 I_t^4=z^0\frac{(1-z^t)^4}{(1-z)^4}
\]
gives
\begin{equation}\label{eq:spline}
 c_d
 =\binom{2t+1-d}{3}-4\binom{t+1-d}{3}
 =\frac{2t^3+t}{3}-td^2+\frac{d^3-d}{2}.
\end{equation}
In particular,
\begin{equation}\label{eq:corrdiff}
 c_{j-1}-c_{j+1}=4jt-3j^2.
\end{equation}

Formula \eqref{eq:seeddiff}, summed explicitly for $a=b=t$, gives
\begin{equation}\label{eq:seedclosed}
 H_j-H_{j+1}
 =2jt-\frac{3j^2+j}{2}+1,
 \qquad1\le j\le t-1.
\end{equation}
Finally, for the mixed correlation
\[
 M=UV^\rv+VU^\rv,
\]
reciprocity of $U$ gives
\begin{equation}\label{eq:mixed}
 [z^d]M=U_d+U_{d+1}=2d+3,
 \qquad2\le d\le t-2.
\end{equation}
The quadratic term in $r$ is central. Combining \eqref{eq:corrdiff}, \eqref{eq:seedclosed}, and \eqref{eq:mixed} yields the exact formula announced in the introduction.

\begin{theorem}\label{thm:longgap}
For $t\ge6$ and $3\le j\le t-3$,
\[
 h_j(t,t,r)-h_{j+1}(t,t,r)
 =6jt-\frac{9j^2+j}{2}+1-4r.
\]
\end{theorem}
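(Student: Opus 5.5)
The plan is to specialize the amplification formula \eqref{eq:ampdiff} of Proposition~\ref{prop:amplification} to the square seed $a=b=t$. There the seed algebra is $\s_{t,t}$, its histogram is $U=I_t^2$, the endpoint potential is $\theta=D=2t-2$, and $V_\theta=V=z^{D-1}+z^D$. For $j\ge3$, \eqref{eq:ampdiff} gives
\[
 h_j-h_{j+1}=(H_j-H_{j+1})+(c_{j-1}-c_{j+1})+r(b_{j-1}-b_{j+1}),
\]
with $b_d=[z^d]M$. So it suffices to establish three ingredients on the range $3\le j\le t-3$ and then add them.

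First, I will use \eqref{eq:seedclosed} for the seed difference. This needs $j\le t-1$, which holds. To justify \eqref{eq:seedclosed}, I will evaluate \eqref{eq:seeddiff} with $a=b=t$. The indicator contributes $1$. In the sum over $v=1,\dots,t-1$:
\begin{itemize}
\item The term $\sum_v (t-|j-v|)_+$ has no truncation, since $|j-v|\le t-2$. It splits into the pieces $v\le j$ and $v>j$ as arithmetic series.
\item The term $\sum_v (t-j-v)_+$ runs over $v\le t-j-1$ and equals $\binom{t-j}{2}$.
\end{itemize}
Simplifying should give $2jt-\tfrac{3j^2+j}2$.

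Second, I will use \eqref{eq:corrdiff}. The indices $j\pm1$ lie in $[0,t]$, so the spline formula \eqref{eq:spline} applies. Its justification is that $UU^\rv=z^{-(2t-2)}I_t^4$ up to the centering shift, and in the range $d\le t$ only the first two inclusion–exclusion terms of $(1-z^t)^4$ contribute. Differencing the cubic at $d=j-1$ and $d=j+1$ gives $-t\bigl((j-1)^2-(j+1)^2\bigr)+\tfrac12\bigl[(j-1)^3-(j+1)^3-(j-1)+(j+1)\bigr]=4jt-3j^2$.

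Third, I will compute the mixed term and check the range carefully; this is the step I expect to be the main obstacle, because \eqref{eq:mixed} depends on where $V$ sits. Since $U=I_t^2$ has potentials $0,\dots,2t-2$ and is reciprocal about $D/2$, I will write $UV^\rv=U(z)\,(z^{1-D}+z^{-D})$. Its coefficient at $z^d$ is $U_{D+d-1}+U_{D+d}$, where $U_k$ is the multiplicity of potential $k$. Reciprocity $U_k=U_{D-k}$ turns this into $U_{1-d}+U_{-d}$, which vanishes for $d\ge2$. The term $VU^\rv$ similarly gives $U_{D-1-d}+U_{D-d}$, that is, $U_{d+1}+U_d$. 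With $U_k=k+1$ for $0\le k\le t-1$, this is $2d+3$ whenever $d+1\le t-1$. The arguments $d=j\pm1$ therefore need $2\le j-1$ and $j+1\le t-2$, which is exactly the hypothesis $3\le j\le t-3$. Then $b_{j-1}-b_{j+1}=(2j+1)-(2j+5)=-4$. The quadratic term $r^2(1+z)VV^\rv$ and the term $2r(1+z)$ are central, as already noted in Proposition~\ref{prop:amplification}, so they do not contribute.

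Adding the three ingredients gives $\bigl(2jt-\tfrac{3j^2+j}2+1\bigr)+(4jt-3j^2)-4r=6jt-\tfrac{9j^2+j}2+1-4r$, which is the claimed formula.
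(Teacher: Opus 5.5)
Your proposal is correct and follows essentially the same route as the paper. The paper's proof specializes the amplification identity \eqref{eq:ampdiff} and adds three contributions: the seed difference \eqref{eq:seedclosed}, the autocorrelation difference \eqref{eq:corrdiff}, and the mixed-term difference $-4r$, noting that the $r^2$ and $2r(1+z)$ terms are central. You do the same, and in addition derive those three ingredient formulas, including the check that $2\le j\pm1\le t-2$ is exactly what the mixed term needs.
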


\begin{proof}
The fixed seed contributes \eqref{eq:seedclosed}, the fold autocorrelation contributes \eqref{eq:corrdiff}, and the mixed term contributes
\[
 r\bigl(M_{j-1}-M_{j+1}\bigr)=-4r.
\]
The terms quadratic in $r$, as well as the direct extension term, are supported at the center and do not enter these degrees.
\end{proof}

\begin{proof}[Proof of Theorem~A]
For $3\le j\le t-3$,
\[
 6jt-\frac{9j^2+j}{2}+1
 \le 6jt-\frac92j^2+1
 \le 2t^2+1.
\]
At $r=t^2$, Theorem~\ref{thm:longgap} therefore gives
\begin{equation}\label{eq:uniformnegative}
 h_j-h_{j+1}\le-(2t^2-1)
 \qquad(3\le j\le t-3).
\end{equation}
Take $t=L+5$.  Then the indicated range contains exactly $L$ indices, $j=3,\ldots,L+2$, proving \eqref{eq:strongdefectintro} and the consecutive rise.

Summing the exact negative of Theorem~\ref{thm:longgap} over this range gives
\[
 h_{t-2}-h_3
 =\sum_{j=3}^{t-3}(h_{j+1}-h_j)
 =\frac{(t-5)(5t^2-7t+16)}2.
\]
Substituting $t=L+5$ gives \eqref{eq:massintro}.  Finally, for any $e\in\g_1$ the grading implies
\[
 \ad e(\g_j)\subseteq\g_{j+1}.
\]
Hence
\[
 \dim\operatorname{coker}(\ad e:\g_j\to\g_{j+1})
 \ge h_{j+1}-h_j\ge2t^2-1
\]
on each of the $L$ steps.  The corresponding algebra lies in $\slie_{6t^2}$.
\end{proof}

\begin{corollary}[Unbounded $SL_2$ defect]\label{cor:sl2defect}
For a semisimple integral Frobenius principal grading define
\[
 \operatorname{Def}_{SL_2}(\g)
 =\sum_{m\ge1}\max\{0,h_{m+1}-h_m\}.
\]
This is the total negative coefficient mass of the virtual character \eqref{eq:weylexpansion}, and it vanishes exactly when the Ooms multiplicities are unimodal.  Along the family in Theorem~A,
\[
 \operatorname{Def}_{SL_2}(\g)
 \ge \frac{L(5L^2+43L+106)}2,
\]
so the defect is unbounded and grows at least cubically in the length of the prescribed outward rise.
\end{corollary}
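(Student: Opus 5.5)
The corollary has two parts: an identification of $\operatorname{Def}_{SL_2}$ with the negative mass of the virtual character, and a lower bound along the family of Theorem~A. Both follow quickly from Theorem~\ref{thm:sl2dictionary} and the proof of Theorem~A.

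For the identification, I will invoke the expansion \eqref{eq:weylexpansion}. It exhibits $\Theta_\g$ with the uniquely determined coefficient $d_m=h_m-h_{m+1}$ on $\chi_{2m-1}$. Uniqueness holds because the $\chi_{2m-1}$ are linearly independent: each has distinct top degree $2m-1$. The negative part of the coefficient $d_m$ has size $\max\{0,-d_m\}=\max\{0,h_{m+1}-h_m\}$. Summing over $m\ge1$ gives the total negative mass, which is exactly $\operatorname{Def}_{SL_2}(\g)$. The sum vanishes iff every $d_m\ge0$. By the equivalence (1)$\Leftrightarrow$(2) of Theorem~\ref{thm:sl2dictionary}, together with the symmetry $h_j=h_{1-j}$ from \eqref{eq:symmetry}, this is exactly unimodality.

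For the lower bound, I restrict the sum defining $\operatorname{Def}_{SL_2}$ to the indices $m=3,\ldots,L+2$. Since all summands are nonnegative, dropping the others only decreases the sum. On this block, Theorem~A (via \eqref{eq:uniformnegative} with $t=L+5$, $r=t^2$) gives $h_{m+1}-h_m\ge 2t^2-1>0$. Hence each term $\max\{0,h_{m+1}-h_m\}$ equals $h_{m+1}-h_m$ itself. The restricted sum telescopes to $h_{L+3}-h_3$, which is computed in \eqref{eq:massintro} as $L(5L^2+43L+106)/2$. This yields the stated inequality. The bound is cubic in $L$ with positive leading coefficient, so it is unbounded and grows at least cubically.

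The only point needing care is the telescoping step: each term on the chosen range must be genuinely positive, so that removing the $\max$ is legitimate. This is exactly the uniform negativity statement in the proof of Theorem~A, and the rest is bookkeeping. I expect no real obstacle. I will simply check that the index range $3\le j\le t-3$ with $t=L+5$ matches $3,\ldots,L+2$, and that $h_{t-2}=h_{L+3}$.
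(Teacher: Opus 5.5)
Your proposal is correct and follows the paper's own route. The paper gives no separate proof: the identification with negative mass is read off from the expansion \eqref{eq:weylexpansion}, and the bound comes from restricting the defect sum to $m=3,\ldots,L+2$. There \eqref{eq:uniformnegative} (with $t=L+5$, $r=t^2$) makes every term positive, so the restricted sum telescopes to the rise $h_{L+3}-h_3=L(5L^2+43L+106)/2$ computed in the proof of Theorem~A.
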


The same formulas show that the outward rise can have unbounded relative height.

\begin{corollary}\label{cor:ratio}
For every $R>0$, there is a Frobenius parabolic in the family \eqref{eq:introfamily} and an integer $j>3$ such that
\[
 h_j>Rh_3.
\]
\end{corollary}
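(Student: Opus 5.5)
The plan is to stay inside the square subfamily $a=b=t$ of \eqref{eq:introfamily} and let the extension parameter $r$ grow much faster than any power of $t$ appearing in the seed data. By \eqref{eq:masterrect}, every multiplicity $h_j(t,t,r)$ with $j\ge3$ is an affine function of $r$. The $r^2$ term $(1+z)r^2VV^\rv$ and the direct term $2r(1+z)$ are supported near the center (degrees at most $2$), so they do not contribute in degrees $j\ge3$. The coefficient of $r$ in $h_j$ is therefore $[z^j](1+z)M=M_j+M_{j-1}$, with $M=UV^\rv+VU^\rv$ as in \eqref{eq:mixed}. The $r$-independent part is $[z^j]H_{t,t}+c_j+c_{j-1}$, which is $O(t^3)$ uniformly in $j$ by \eqref{eq:seedformulas} and \eqref{eq:spline}; a crude bound by $\dim\s_{t,t}+t^4$ is enough here.

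The first step is to compute the slope in $r$ using \eqref{eq:mixed}. For $3\le j\le t-2$, both indices $j-1$ and $j$ lie in $[2,t-2]$, so
\[
 [z^j]\bigl((1+z)M\bigr)=(2j+3)+(2j+1)=4j+4 .
\]
Hence
\[
 h_j(t,t,r)=(4j+4)\,r+K_j(t),\qquad 0\le K_j(t)\le Ct^4,
\]
where $C$ is absolute. In particular $h_3=16r+K_3(t)$ and $h_{t-2}=4(t-1)r+K_{t-2}(t)$.

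The second step is to choose the parameters. Given $R>0$, first fix $t\ge6$ with $(t-1)/4>R$, say $(t-1)/4\ge R+1$. Then
\[
 \frac{h_{t-2}}{h_3}\ \ge\ \frac{4(t-1)r}{16r+Ct^4}\ \longrightarrow\ \frac{t-1}{4}\qquad(r\to\infty),
\]
so for all sufficiently large $r$ (for instance $r\ge Ct^4$ times a constant depending on $R$) the ratio exceeds $R$. Taking $j=t-2>3$ gives the claim. Every $\p_{t,t,r}$ is Frobenius by Corollary~\ref{cor:masterrect}.

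The only point needing care is the bookkeeping in the first step. One must confirm that no $r$-dependence beyond the mixed term reaches degrees $j\ge3$; this follows from the support of $V_\theta V_\theta^\rv$ noted in the proof of Proposition~\ref{prop:amplification}. One must also confirm that the range of validity of \eqref{eq:mixed} covers both $j-1$ and $j$ when $j=t-2$. If that endpoint is delicate, taking $j=t-3$ instead changes nothing, since the slope ratio $(j+1)/4$ is still unbounded in $t$.
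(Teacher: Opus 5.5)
Your proposal is correct and is essentially the paper's proof: restrict to $a=b=t$, note that for $3\le j\le t-2$ the multiplicity $h_j(t,t,r)$ is affine in $r$ with slope $4(j+1)$ coming from the mixed term \eqref{eq:mixed}, so $h_{t-2}/h_3\to(t-1)/4$ as $r\to\infty$, then choose $t$ with $(t-1)/4>R$ and take $r$ large. You simply make explicit the bookkeeping that the paper leaves implicit, namely the central support of the $r^2$ and direct terms, the validity of \eqref{eq:mixed} at both $j-1$ and $j$ when $j=t-2$, and an $r$-independent bound on the constant part.
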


\begin{proof}
For $3\le j\le t-2$, the coefficient of $r$ in $h_j(t,t,r)$ is $4(j+1)$ and the quadratic coefficient vanishes. Therefore
\[
 \lim_{r\to\infty}\frac{h_{t-2}(t,t,r)}{h_3(t,t,r)}=\frac{t-1}{4}.
\]
Choose $t$ so that $(t-1)/4>R$, then take $r$ sufficiently large.
\end{proof}

The corollary concerns the ratio of two positive-degree multiplicities. It is not a statement about their proportions in the total dimension, nor does Theorem~A assert arbitrarily many distinct local maxima.  The centered principal character can therefore have an arbitrarily long block of negative irreducible $SL_2$ multiplicities, with unbounded total negative mass.

\section{A simple one-parameter family}\label{sec:simple}

Set $(a,b)=(6,5)$ in Corollary~\ref{cor:masterrect}. Then
\[
 \s_{6,5}=\q((1,5,24)\mid(30)),
 \qquad
 U=I_6I_5,
\]
with coefficient list
\[
 (1,2,3,4,5,5,4,3,2,1).
\]
The resulting parabolics are precisely \eqref{eq:simplefamily}. Expanding \eqref{eq:masterrect} gives the complete positive half of the spectrum.

\begin{table}[htbp]
\centering
\caption{Positive-degree multiplicities for the family $\p_r$ in \eqref{eq:simplefamily}.}
\label{tab:simple}
\begin{tabular}{rll}
\toprule
$j$ & $h_j(r)$ & $h_j(r)-h_{j+1}(r)$\\
\midrule
1&$310+14r+3r^2$&$27+r+2r^2$\\
2&$283+13r+r^2$&$46-3r+r^2$\\
3&$237+16r$&$56-3r$\\
4&$181+19r$&$57$\\
5&$124+19r$&$49+3r$\\
6&$75+16r$&$35+4r$\\
7&$40+12r$&$22+4r$\\
8&$18+8r$&$12+4r$\\
9&$6+4r$&$5+3r$\\
10&$1+r$&$1+r$\\
\bottomrule
\end{tabular}
\end{table}

The second difference column is positive for every $r\ge0$ except possibly the entry $56-3r$: indeed
\[
 r^2-3r+46=(r-1)(r-2)+44>0.
\]
Thus the transition occurs exactly between $r=18$ and $r=19$. The dimension formula \eqref{eq:dimrect} becomes
\[
 \dim\p_r=8r^2+244r+2550.
\]
At $r=19$ this is $10{,}074$.

Figure~\ref{fig:histograms} displays the full multiplicity sequence before the extension begins and at the first nonunimodal parameter. The symmetry relation is $h_{1-j}=h_j$.

\begin{figure}[htbp]
\centering
\begin{tikzpicture}
\begin{axis}[
 width=0.96\textwidth,height=4.25cm,
 ybar,bar width=5pt,
 xmin=-9.7,xmax=10.7,
 xtick={-9,-8,...,10},
 tick label style={font=\scriptsize},
 xlabel={eigenvalue $\lambda$},ylabel={$h_\lambda$},
 title={$r=0$: strictly unimodal, $\dim\p_0=2550$},
 title style={font=\small},
 axis x line*=bottom,axis y line*=left,
 enlarge y limits={upper,value=0.10},
]
\addplot+[draw=black,fill=black!20] coordinates {
(-9,1)(-8,6)(-7,18)(-6,40)(-5,75)(-4,124)(-3,181)(-2,237)(-1,283)(0,310)
(1,310)(2,283)(3,237)(4,181)(5,124)(6,75)(7,40)(8,18)(9,6)(10,1)};
\end{axis}
\end{tikzpicture}

\vspace{0.6em}

\begin{tikzpicture}
\begin{axis}[
 width=0.96\textwidth,height=4.25cm,
 ybar,bar width=5pt,
 xmin=-9.7,xmax=10.7,
 xtick={-9,-8,...,10},
 tick label style={font=\scriptsize},
 xlabel={eigenvalue $\lambda$},ylabel={$h_\lambda$},
 title={$r=19$: first nonunimodal member, $\dim\p_{19}=10074$},
 title style={font=\small},
 axis x line*=bottom,axis y line*=left,
 enlarge y limits={upper,value=0.14},
]
\addplot+[draw=black,fill=black!20] coordinates {
(-9,20)(-8,82)(-7,170)(-6,268)(-5,379)(-4,485)(-3,542)(-2,541)(-1,891)(0,1659)
(1,1659)(2,891)(3,541)(4,542)(5,485)(6,379)(7,268)(8,170)(9,82)(10,20)};
\node[font=\scriptsize,anchor=south west] at (axis cs:2.25,620) {$h_3=541<h_4=542$};
\draw[->,thin] (axis cs:3.0,610) -- (axis cs:3,548);
\draw[->,thin] (axis cs:4.35,610) -- (axis cs:4,549);
\end{axis}
\end{tikzpicture}
\caption{Full Ooms multiplicity histograms for two members of the same Frobenius parabolic family. The horizontal axis is the eigenvalue. The lower plot contains the first outward reversal, at eigenvalues $3$ and $4$.}
\label{fig:histograms}
\end{figure}

By Theorem~\ref{thm:sl2dictionary}, the first differences at $r=19$ are the irreducible coefficients of the centered principal character:
\[
 (768,350,-1,57,106,111,98,88,62,20).
\]
Thus
\begin{equation}\label{eq:136virtual}
 \Theta_{\p_{19}}
 =768\chi_1+350\chi_3-\chi_5+57\chi_7+106\chi_9+111\chi_{11}
 +98\chi_{13}+88\chi_{15}+62\chi_{17}+20\chi_{19}.
\end{equation}
The unique negative coefficient is $-1$ at $\chi_5$.

The full spectrum at $r=19$ is therefore
\[
\begin{array}{c|rrrrrrrrrr}
\lambda&-9&-8&-7&-6&-5&-4&-3&-2&-1&0\\
\hline
h_\lambda&20&82&170&268&379&485&542&541&891&1659
\end{array}
\]
\[
\begin{array}{c|rrrrrrrrrr}
\lambda&1&2&3&4&5&6&7&8&9&10\\
\hline
h_\lambda&1659&891&541&542&485&379&268&170&82&20.
\end{array}
\]
This proves Theorem~B.

\begin{remark}\label{rem:minimality}
An exact finite check over the three-parameter family \eqref{eq:introfamily} shows that $(a,b,r)=(6,5,19)$ has the smallest matrix size and the smallest Lie-algebra dimension among nonunimodal members of that construction. The check is not an enumeration of all Frobenius parabolics, and no global minimality assertion is made.
\end{remark}

\section{Boundary questions}

The $SL_2$ formulation raises a structural question.  Maximal parabolics have effective centered principal characters, while the examples here have virtual characters with negative irreducible coefficients.  One problem is to characterize this boundary intrinsically. In particular, it would be useful to determine the least number of parabolic blocks for which nonunimodality can occur, and to classify the endpoint histograms for which the linear term in Proposition~\ref{prop:amplification} has negative first difference.

The criterion in Proposition~\ref{prop:amplification} can also be stated in terms of the endpoint histogram. Given a Frobenius seaweed with a distinguished endpoint, the sign pattern of
\[
 UV_\theta^\rv+V_\theta U^\rv
\]
controls whether repeated extension followed by folding eventually destroys unimodality away from the center. The rectangular seeds give an explicit family with this behavior.  It remains to determine how frequently the same sign pattern occurs for other Frobenius meanders.

\appendix
\section{An independent exact certificate for the \texorpdfstring{$\slie_{136}$}{sl(136)} example}\label{app:certificate}

The proof of Theorem~B is already complete from the single-path criterion and the exact polynomial formula. We record a direct Kirillov certificate for the first concrete counterexample as an independent check.

For $r=19$, let $F$ be the directed-arc functional on
\[
 \p_{19}=\q((136)\mid(24,5,2^{[19]},1,2^{[19]},30)).
\]
Using the allowed off-diagonal matrix units together with
\[
 D_i=E_{ii}-E_{136,136},\qquad1\le i\le135,
\]
as a basis, the Kirillov form decomposes by principal degree. Since $F$ is supported in degree $1$, only degrees $k$ and $1-k$ pair. Exact elimination gives the following block determinants.

\begin{center}
\begin{tabular}{r|rrrrrrrrrr}
\toprule
$k$&$-9$&$-8$&$-7$&$-6$&$-5$&$-4$&$-3$&$-2$&$-1$&$0$\\
\midrule
block size&20&82&170&268&379&485&542&541&891&1659\\
$|\det M_k|$&1&1&1&1&1&1&1&1&1&136\\
\bottomrule
\end{tabular}
\end{center}

A rational elimination and a separate integral unit-pivot elimination agree. In the central block, after $1657$ unit pivots, the latter leaves
\[
 \begin{pmatrix}79&-78\\57&-58\end{pmatrix},
 \qquad\det=-136.
\]
Therefore
\[
 |\det B_F|=136^2=18{,}496\ne0.
\]
Every supporting arc has principal degree one, so the corresponding trace-zero diagonal satisfies $F([\widehat F,x])=F(x)$ on the matrix-unit basis. This verifies directly, without appealing to the meander-index theorem, that the displayed $541<542$ reversal is an Ooms-spectrum reversal for a Frobenius parabolic.

The exact-arithmetic verification used to audit the finite example and the restricted minimum in Remark~\ref{rem:minimality} is available from the author. These calculations are checks on finite data; none of the infinite statements above is inferred from a numerical search.

\end{document}